\newtheorem{theorem}{Theorem}[section]
\newtheorem{lemma}[theorem]{Lemma}
\newtheorem{definition}[theorem]{Definition}
\newtheorem{assumption}{Assumption}
\numberwithin{equation}{section}
\numberwithin{algorithm}{section}
\numberwithin{figure}{section}
\numberwithin{assumption}{section}
\DeclareMathAlphabet{\mathcall}{OMS}{cmsy}{m}{n}
\newcommand{\cl}[1]{\operatorname{cl}\!\left(#1\right)}
\newcommand{\deriv}[1]{\frac{\mathrm{d}}{\mathrm{d}#1}}
\newcommand{\xCinfty}{C^{\infty}}
\newcommand{\xCzero}{C^{0}}
\newcommand{\xCone}{C^{1}}
\providecommand{\keywords}[1]{\textbf{\textit{Key words---}} #1}
\providecommand{\MSC}[1]{\textbf{\textit{MSC---}} #1}
\begin{document}
\selectlanguage{\english}
  \title{On the Convergence of the Policy Iteration for Infinite-Horizon Nonlinear Optimal Control Problems}
  \author{Tobias Ehring \thanks{Institute of Applied Analysis and Numerical Simulation, University of Stuttgart, Pfaffenwaldring 57, Stuttgart 70569, Baden-W\"urttemberg, Germany. E-mail: ehringts@mathematik.uni-stuttgart.de} \and  Behzad Azmi\thanks{Department of Mathematics and Statistics, University of Konstanz, Universit\"atsstra\ss e 10, Konstanz 78457, Baden-W\"urttemberg, Germany. E-mail: behzad.azmi@uni-konstanz.de} \and Bernard Haasdonk\thanks{Institute of Applied Analysis and Numerical Simulation, University of Stuttgart, Pfaffenwaldring 57, Stuttgart 70569, Baden-W\"urttemberg, Germany. E-mail: haasdonk@mathematik.uni-stuttgart.de}}
  \date{\today}
  \maketitle
\begin{abstract} \noindent Policy iteration (PI) is a widely used algorithm for synthesizing optimal feedback control policies across many engineering and scientific applications. When PI is deployed on infinite-horizon, nonlinear, autonomous optimal-control problems, however, a number of significant theoretical challenges emerge -- particularly when the computational state space is restricted to a bounded domain. In this paper, we investigate these challenges and  show that the viability of PI in this setting hinges on the existence, uniqueness, and regularity of solutions to the Generalized Hamilton–Jacobi–Bellman (GHJB) equation solved at each iteration. To ensure a well-posed iterative scheme, the GHJB solution must possess sufficient smoothness, and the domain on which the GHJB equation is solved must remain forward-invariant under the closed-loop dynamics induced by the current policy. Although fundamental to the method’s convergence, previous studies have largely overlooked these aspects. This paper closes that gap by introducing a constructive procedure that guarantees forward invariance of the computational domain throughout the entire PI sequence and by establishing sufficient conditions under which a suitably regular GHJB solution exists at every iteration. Numerical results are presented for a grid-based implementation of PI to support the theoretical findings. \end{abstract}

\keywords{policy iteration, feedback policy synthesis, Hamilton-Jacobi-Bellman equation, infinite-horizon optimal controls}

\MSC{49L20,  49N35,  49J15,  49L12}

\section{Introduction}
\noindent The synthesis of optimal feedback policies is a fundamental strategy for achieving robust, real-time control of dynamical systems in the presence of uncertainties, disturbances, and modeling errors. Such policies adapt control actions to the current state, thereby generating trajectories that minimize a predetermined cost. In this work, we consider a class of nonlinear autonomous optimal control problems (OCPs) defined over an infinite time horizon. More precisely, we investigate systems in which the control appears affinely in the dynamics and quadratically in the cost functional. Our objective is to obtain such an optimal feedback policy for  problems of the form
\begin{gather}
\tag{OCP}
v^*(x) =  \inf_{\mathbf{u} \in \mathcal{U}_{\infty}} \int_0^{\infty}  \Bigl[ h( \mathbf{x}(t)) + \left\langle  \mathbf{u}(t), R  \mathbf{u}(t) \right \rangle \Bigr] \mathrm d t \label{eq:MPInfinit}, \\ 
\text{s.t. }\quad \dot{\mathbf{x}}(t) = f(\mathbf{x}(t))+g(\mathbf{x}(t))\mathbf{u}(t) , \quad \forall t \in [0,\infty) \quad \text{ and } \quad \mathbf{x}(0) = x \in \mathbb{R}^N, 
\label{eq:ODEInfinit}
\end{gather}
with the admissible control set defined as 
\[
\mathcal{U}_{\infty} := \Big\{ \mathbf{u}: [0,\infty) \to \mathbb{R}^M \,\big|\, \mathbf{u}\text{ is measurable and essentially bounded} \Big\}.
\]
Here, the function \(v^*(x)\), also known as the optimal value function (VF), represents the optimal cost-to-go from the initial state \(x\). In what follows, time-dependent scalar and vector-valued functions are denoted using bold symbols to distinguish them from time-independent quantities. For a controller from the admissible control set $\mathcal{U}_{\infty}$, the solution to \eqref{eq:ODEInfinit} is initially understood in the Carathéodory sense \cite{Filippov1988}. However, since we subsequently restrict our analysis to at least continuous controllers, solutions to \eqref{eq:ODEInfinit} will ultimately be considered in the classical sense. Sufficient conditions ensuring well-posedness of \eqref{eq:MPInfinit} are provided in \cite{Dmitruk2005}. Those technical assumptions are not reproduced here; instead, wherever needed in the theorems below, we  assume the existence of an optimal control.\\
 Throughout the analysis, we impose the following regularity conditions on the system data. These conditions ensure that the system dynamics is smooth, and that the running cost $h(x) + \left\langle  u, R  u \right \rangle$ is strictly positive except at the origin.
\begin{assumption}\label{as:data}
  Let \(f \in \xCinfty(\mathbb{R}^N,\mathbb{R}^N)\), \(g \in \xCinfty(\mathbb{R}^N,\mathbb{R}^{N \times M})\), \(h \in \xCinfty(\mathbb{R}^N,\mathbb{R})\) and let \(R \in \mathbb{R}^{M\times M}\) be a symmetric positive-definite matrix (i.e.\ \(\left\langle    u,R u\right \rangle  >0\) for all \(u \in \mathbb{R}^M\! \setminus \! \{0\}\)). 
  Moreover, assume \(f(0) = 0\) and that \(h\) is a positive-definite function (i.e.\ \(h(x)>0\) for all \(x \in \mathbb{R}^N\! \setminus \! \{0\}\) and \(h(0)=0\)).
\end{assumption}
\noindent A fundamental concept in dynamic optimization is Bellman’s principle of optimality (see, e.g., \cite[Theorem 1.1]{Kamalapurkar2018}). This principle enables the reformulation of the infinite-horizon problem as a family of finite-horizon problems. For any finite terminal time $T>0$, the problem \eqref{eq:MPInfinit} can equivalently be expressed  as follows
 \begin{gather}
v^*(x) = \inf_{\mathbf{u} \in \mathcal{U}_{T}} \left[\int_0^{T} \Bigl[ h(\mathbf{x}(t)) +\left\langle  \mathbf{u}(t), R \mathbf{u}(t)\right \rangle \Bigr] \mathrm d t +v^*(\mathbf{x}(T))\right],  \label{eq:MPFinit} \\ 
\text{s.t. } \quad \dot{\mathbf{x}}(t) = f(\mathbf{x}(t))+g(\mathbf{x}(t))\mathbf{u}(t), \quad \forall t \in [0,T] \quad \text{ and } \quad \mathbf{x}(0) = x, \label{eq:ODEFinite}
\end{gather}
with $$\mathcal{U}_{T} := \{ \mathbf{u}: [0, T ] \rightarrow  \mathbb{R}^M \, \vert \, \mathbf{u}\text{ measurable and essentially bounded} \}.$$
\noindent This formulation reflects the principle that any tail of an optimal trajectory is itself optimal. Henceforth,   a domain is a non-empty, open, and connected subset of  $\mathbb{R}^N$. Provided that the optimal VF $v^*$ is continuously differentiable on a domain $\Omega \subset \mathbb{R}^N$  containing the origin,  we can derive a partial differential equation (PDE) characterizing $v^*$ (see, e.g., \cite{Bardi1997}) by rearranging \eqref{eq:MPFinit}--\eqref{eq:ODEFinite}, dividing by $T$, and taking the limit as $T \to 0$,
\begin{align}
\label{eq:HBJ}
\min_{u \in \mathbb{R}^M}\left\lbrace \left\langle f(x)+g(x)u,  \nabla v^*(x)\right\rangle + h(x) + \left\langle u , R u \right\rangle \right\rbrace = 0 
\; \text{ for } x \in \Omega, \text{ with } v^*(0) = 0.
\end{align} 
\noindent 
This equation is known as the Hamilton–Jacobi–Bellman (HJB) equation. It involves a convex optimization problem with respect to  $u$, which can be solved exactly using the first-order optimality condition. This yields
\begin{align}\label{eq:HBJ2}
\tag{HJB}
\left\langle f(x),  \nabla v^*(x)\right\rangle  - \frac{1}{4}  \nabla {v^*}^{\top}(x) g^{\top}(x)  R^{-1} g(x) \nabla v^*(x) +h(x) = 0 \; \text{ for } x \in \Omega,  \text{ with } v^*(0) = 0,
\end{align}
where the minimizer is given explicitly by
\begin{align} 
\mathcal{K}(x;\nabla v^*) := u^*(x) = -\frac{1}{2}R^{-1}g(x)^{\top} \nabla v^*(x). \label{eq:feedback}
\end{align}
Inserting this feedback control law into \eqref{eq:ODEInfinit} produces the optimal closed-loop dynamics. Consequently, the design of an optimal feedback controller reduces to computing the associated optimal VF.  However, because of the nonlinearity and the potentially high-dimensional state space of \eqref{eq:HBJ2}, solving it directly for the optimal VF is a challenging task.\\
A widely adopted algorithm to tackle \eqref{eq:HBJ2} is Policy Iteration (PI). Originally introduced by Bellman \cite{Bellman2010} and rigorously analyzed by Howard \cite{Howard1958} in the context of discrete dynamic programming, the method was subsequently translated to continuous-time systems by Leake \cite{Leake1967} and Vaisbord \cite{Vaisbord1963}. Notably, PI can also be interpreted as a Newton-type iterative method applied to the nonlinear equation \eqref{eq:HBJ2}. At each iteration, given a control policy $u$, the algorithm first executes a policy evaluation step by computing the VF $v_u$ as the solution to the linearized HJB equation, commonly referred to as the generalized HJB (GHJB) equation
\begin{equation}\label{eq:GHJB}
\mathrm{GHJB}(v_u,u) := \left\langle f(x)+g(x)u,  \nabla v_u(x)\right\rangle  + h(x) + \left\langle   u(x) ,  R\, u(x) \right \rangle = 0  \; \text{ for } x \in \Omega,   \text{ with } v_u(0)=0.
\end{equation}
The solution $v_u$ indeed provides the performance index for the current control policy $u$, i.e,
\begin{gather}\label{eq:VFu}
v_u(x) =   \int_0^{\infty} \Bigl[ h( \mathbf{x}_u(t;x)) + \left\langle   u( \mathbf{x}_u(t;x)), R  u( \mathbf{x}_u(t;x))\right \rangle \Bigr] \mathrm d t,
\end{gather}
where $\mathbf{x}_u(t;x)$ denotes the trajectory of the closed-loop system   \begin{gather*}
\dot{\mathbf{x}}_u(t;x) = f(\mathbf{x}_u(t;x))+g(\mathbf{x}_u(t;x))u(\mathbf{x}_u(t;x)) \; \text{ and  }\, \mathbf{x}_u(0;x) = x.
\end{gather*} 
Subsequently, the policy is refined via a policy improvement step by updating the control law according to
\[
u^+(x) := -\frac{1}{2} R^{-1} g(x)^{\top} \nabla v_u(x).
\]
This is iteratively repeated until convergence, i.e., until a fixed point is reached at which the corresponding VF satisfies \eqref{eq:HBJ2}. A generic version of the PI algorithm is outlined in Algorithm \ref{algo:PI}.
  \setcounter{algorithm}{1}
\begin{algorithm}[H]
\caption{Generic policy iteration}\label{algo:PI}
\begin{flushleft}
\textbf{Input:}\\[2pt]
\hspace*{1.5em}initial feedback \(u_{0}\);\; convergence tolerance \(\varepsilon>0\)\\[6pt]

\textbf{Initialisation:}\\[2pt]
\hspace*{1.5em}%
\(v_{-1}\equiv 0,\;
e_{0}:=\varepsilon+1,\;
s:= 0\) \\[6pt]

\textbf{Main loop:}\\[2pt]
\textbf{while} \(e_{s}>\varepsilon\)\ \textbf{do}\\
\hspace*{1.5em}%
\textbf{(1)}\; solve   
$
\mathrm{GHJB}\bigl(v_{s},u_{s}\bigr)=0 $ and $v_{s}(0)=0$ for the value function $v_s$;
\\[4pt]
\hspace*{1.5em}%
\textbf{(2)}\; update the feedback law
$
u_{s+1}(x)\;:=\;-\tfrac12\,R^{-1}g(x)^{\!\top}\nabla v_{s}(x);
$\\[4pt]
\hspace*{1.5em}%
\textbf{(3)}\; compute the residual  
\(e_{s+1}\;:=\;\lVert v_{s}-v_{s-1}\rVert;\)\\[4pt]
\hspace*{1.5em}%
\textbf{(4)}\; increment the counter \(s\;:=\;s+1\).\\
\textbf{end while}\\[6pt]

\textbf{Output:}\\[2pt]
\hspace*{1.5em}%
approximate optimal value function \(v_{\,s-1}\) and feedback \(u_{s}\).
\end{flushleft}
\end{algorithm}
\noindent   A critical step in the PI algorithm involves solving the  GHJB equation.  This step raises two important issues that have not been fully addressed in the existing literature and clarifying these issues is the main theoretical contribution of this work. Specifically, at each iteration of the PI algorithm, it is necessary to determine: 
\begin{itemize}
    \item[1.] On which computational domain \(\Omega \subset \mathbb{R}^N\) must the GHJB be solved?
    \item[2.] Under what conditions on the current controller \(u\) does the GHJB admit a continuously differentiable unique solution $v_s$?
\end{itemize}
The first question is particularly important because the updated controller \( u^{+} \), derived from the solution \( v_u \) associated with the current controller \( u \), may generate trajectories that exit the current computational domain \( \Omega \). However, since \( v \) is defined only on \( \Omega \), the application of \( u^{+} \) outside this domain is not feasible. Therefore, it is necessary to identify a new subset \( \Omega^{+} \subset \Omega \) such that the trajectories governed by the updated controller \( u^+ \) remain entirely within \( \Omega^+ \).\\
Most of the existing literature assumes global solvability of  \eqref{eq:HBJ2} over the entire domain  \(\Omega = \mathbb{R}^N\)  (see, for example, \cite{Saridis1979, Saridis1984}). However, for nonlinear infinite-horizon problems, this assumption often does not hold.
For a convergence proof of PI in the case  \(\Omega = \mathbb{R}^N\), we can mention e.g., \cite[Theorem 3.1.4]{Jiang2017}. In particular, for the Linear-Quadratic Regulator (LQR) optimal control problem, the analysis is naturally carried out over the entire state space \(\mathbb{R}^N\). In this setting, the system dynamics are given by
\[
f(x) = Ax,\quad g(x) = B,
\]
for matrices $A \in \mathbb{R}^{N \times N}$, $B\in \mathbb{R}^{N \times M}$ and the running state cost is defined as
\[
h(x) = x^\top Q x,
\]
with \(Q \in \mathbb{R}^{N \times N}\) positive-definite.
Here, all VFs in the PI algorithm reduce to quadratic forms and are therefore fully characterized by a finite set of coefficients, which can be determined by solving a Lyapunov equation derived from the GHJB equation. As a result, Algorithm~\ref{algo:PI} simplifies to the well-known Newton–Kleinman algorithm for computing the solution to the algebraic Riccati equation, originally introduced in \cite{Kleinman1968}. However, in the general nonlinear case, expecting solvability of the GHJB equation in \(\mathbb{R}^N\) is not realistic.\\
 Concerning the second question, under mild assumptions on the controller \(u_s\) -- namely, continuity and admissibility in the sense that \eqref{eq:VFu} is finite, one can guarantee the existence of a continuous solution to the GHJB equation, denoted by \(v_{u_s}\). In general, this solution is only differentiable along the direction of the dynamics, i.e., along the vector field given by
$
f(x) + g(x)u_s(x),
$
which is necessary for the GHJB equation to be well-posed. However, to ensure that Step 2 in Algorithm \(\ref{algo:PI}\) is well-defined, overall differentiability of \(v_s\) is required. We address this issue by imposing the regularity assumption stated in Assumption \(\ref{as:data}\).\\
In literature, the first rigorous analyses for continuous-time optimal control problems were presented in \cite{Saridis1979} and \cite{Saridis1984}. In these works, convergence theorems were established under the assumption that the VF is differentiable at every iteration and, consequently, satisfies the GHJB equation. Therefore, if a control update yields a VF that is not differentiable, the PI algorithm may fail. In subsequent publications \cite{Beard1995,Beard1997,Beard1998}  this result was inaccurately cited. Specifically, these works inferred the differentiability of the VF as a consequence of the admissibility of the controller, whereas in the original studies, differentiability was explicitly assumed a priori. This misinterpretation has been propagated in later research \cite{Dolgov2021,Kalise2020,Vrabie2009,Vrabie2009a,Kalise2018}, leading to overly simplified and incomplete convergence proofs for PI. This inaccuracy may partly account for the limited amount of rigorous convergence analysis in the literature, as the issue was prematurely regarded as resolved. Nevertheless, several more recent studies \cite{Vamvoudakis2009,Vamvoudakis2010,Kundu2021,Jiang2017,Kamalapurkar2018} continue to follow the original formulation presented in \cite{Saridis1979}, maintaining the assumption of VF differentiability. However, these works also place a greater emphasis on the numerical aspects of the PI method.\\
Generally, approximating the optimal VF using PI has gained significant interest in recent years. Numerous numerical schemes have been introduced, each mainly differing in the method used to approximate the solution to the GHJB equation. These approaches include:
\begin{itemize}
    \item \textbf{Low-Rank Approximation Techniques:} In high-dimensional state spaces, low-rank approximation methods \cite{Eigel2022,Dolgov2021} have been developed to efficiently represent the VFs.
    \item \textbf{Neural Network Approximations:} Several studies \cite{AbuKhalaf2005,Kamalapurkar2018,Vamvoudakis2009,Vamvoudakis2010,Vrabie2009a} have employed neural networks to approximate the VFs.
    \item \textbf{Galerkin Approximation Methods:} Galerkin approaches \cite{Kalise2018,Kalise2020,Beard1997} minimize an \(L^2\)-residual of the GHJB equation, providing a framework to approximate the VFs with polynomials.
\end{itemize}
Additionally, hybrid algorithms have been developed that integrate value iteration -- an alternative iterative method for approximating the optimal VF -- with PI to enhance convergence speed \cite{Alla2015}. Furthermore, data-driven variants of PI have been proposed \cite{Luo2014, Kamalapurkar2018}, which eliminate the requirement for an explicit model of the system dynamics by utilizing measured data. These approaches can also be implemented in an online adaptive manner \cite{Vrabie2009, Jiang2012, Jiang2014}.
As a computational contribution, we present numerical experiments based on a grid-based PI algorithm, inspired by the method proposed in \cite{Alla2015}. The main distinction of our approach lies in the use of a kernel-based interpolation technique in place of orthogonal polynomials. These experiments quantitatively demonstrate the practical relevance of the theoretical issues discussed, particularly their influence on approximation accuracy. Furthermore, our results suggest that certain analytical assumptions may be overly restrictive in many practical applications.\\
The remainder of the paper is organized as follows. Section 2 discusses the solvability of the GHJB equation and analyzes the regularity properties of its solution. In Section 3, we rigorously construct domains on which the updated control policy is admissible. Section 4 presents convergence results for the PI algorithm in two settings: one with a bounded operating domain, and another over the entire state space  
$\mathbb{R}^N$.  In Section 5, we provide numerical experiments to illustrate the theoretical results. Finally, Section 6 concludes the paper with a discussion and outlook.

\section{Solving the generalized Hamilton–Jacobi–Bellman equation }\label{sec:sec2}
\noindent Before addressing the unique solvability of the GHJB equation -- the primary focus of this section -- we first provide a precise definition of what constitutes an admissible controller in the context of the present work. The concept of controller admissibility is fundamental to the study of the PI algorithm and is inherently linked to the domain over which it must be valid. Accordingly, we present the following definition.

\begin{definition}[Admissible Controller--Domain Pair]
\label{def:admissible}
Let \(\Omega \subset \mathbb{R}^N\) be a domain and let { \(u \in \xCzero(\Omega,\mathbb{R}^M)\)} be a controller.  
We say that the pair \((u,\Omega)\) is admissible for \eqref{eq:MPInfinit} if the following conditions hold:
\begin{enumerate}
  \item \textbf{Containment of the Origin.} 
  The domain \(\Omega\) contains the origin, i.e., \(0 \in \Omega\).
  \item \textbf{Zero Control at the Origin.}
  The controller satisfies
  $    u(0) = 0.
  $
    \item \textbf{Forward Invariance of \(\Omega\).}
  For every \(x \in \Omega\), the solution \(\mathbf{x}_u(t;x)\) remains in \(\Omega\) for all \(t \ge 0\), that is,
  $
    \mathbf{x}_u(t;x) \in \Omega,$
   for all $ t \ge 0.
  $
  \item \textbf{Asymptotic Stabilization.} 
  The controller \(u\) asymptotically stabilizes the system on \(\Omega\), i.e.,
  \[
    \lim_{t\to\infty} \Vert \mathbf{x}_u(t;x)\Vert \;=\; 0,
    \quad \forall\, x \in \Omega.
  \]

  \item \textbf{Local Exponential Stabilization.} 
  The controller \(u\) provides local exponential stability in a neighborhood of the origin. Concretely, there exist constants \(\varepsilon > 0\), \(C > 0\), and \(\nu > 0\) such that for all \(x \in B_\varepsilon(0) \subset \Omega\), the trajectory \(\mathbf{x}_u(t;x)\) satisfies
  \[
    \|\mathbf{x}_u(t;x)\| \;\le\; C\,e^{-\nu\,t}\,\|x\|,
    \quad \forall\, t \ge 0.
  \]
\end{enumerate}
\end{definition}
\noindent Throughout the paper, we use $\Vert  \cdot  \Vert$ to denote the Euclidean norm. The local exponential stabilization property is crucial for ensuring that the associated VF $v_u$ defined in \eqref{eq:VFu} is finite and, thus, well-defined. This will be demonstrated later in the proof of Part (1) of Theorem \ref{thm:GHJB}.
Furthermore, note that sufficient conditions for the existence of a local exponentially
stabilizing controller can be checked by examining the pair \((Df(0), g(0))\) and applying 
linearization arguments, where \(Df(0)\) denotes the Jacobian matrix of the vector field 
\(f\) evaluated at the equilibrium \(x = 0\). Specifically, if there exists a matrix 
\(K_R \in \mathbb{R}^{N \times N}\) such that
\[
  A \;:=\; Df(0)\;-\;\tfrac12\,g(0)\,R^{-1}\,g(0)^{\top} K_R
\]
is Hurwitz, i.e.
\[
  \operatorname{Re}\bigl(\sigma(A)\bigr)\;\subset\;(-\infty,0),
\]
where \(\sigma(A)\) denotes the spectrum of \(A\), then, as a direct consequence of
\cite[Corollary 4.3]{Khalil2002}, the feedback law
\[
  u_{K_R}(x)\;:=\;-\tfrac12\,R^{-1}\,g(x)^{\top} K_R x
\]
locally exponentially stabilizes the system in a (sufficiently small) neighborhood of the steady-state zero.\\
Next, we present two lemmas. The first lemma establishes,  for a given admissible controller–domain pair $\bigl(u,\Omega\bigr),$ the uniqueness of the solution to the GHJB equation by analyzing its homogeneous part.
\begin{lemma}\label{lemma:LemmahomogenSol}
    Let $\bigl(u,\Omega\bigr)$ be an admissible controller--domain pair for \eqref{eq:MPInfinit}. If {$v \in \xCone(\Omega,\mathbb{R})$} satisfies
    \begin{align}
        \langle f(x) + g(x)\,u(x), \nabla v(x) \rangle &= 0 \quad \text{for all } x \in \Omega, \label{eq:invariance}\\[1mm]
        v(0) &= 0, \label{eq:bc}
    \end{align}
    then it must hold that
    \[
        v(x) = 0 \quad \text{for all } x \in \Omega.
    \]
\end{lemma}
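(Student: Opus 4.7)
The plan is to evaluate $v$ along the closed-loop trajectory generated by $u$ and exploit the fact that this composition is constant in time, while the trajectory itself converges to the origin where $v$ vanishes. Concretely, fix $x \in \Omega$ and define $\phi(t) := v(\mathbf{x}_u(t;x))$ for $t \geq 0$. Since $(u,\Omega)$ is admissible, the forward invariance property guarantees that $\mathbf{x}_u(t;x) \in \Omega$ for all $t \geq 0$, so $\phi$ is well-defined on $[0,\infty)$.

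Next, I would differentiate $\phi$ using the chain rule. Because $v \in \xCone(\Omega,\mathbb{R})$ and $\mathbf{x}_u(\cdot;x)$ is a classical solution of the closed-loop ODE (here the continuity of $u$ together with the regularity of $f,g$ from Assumption~\ref{as:data} ensures a continuously differentiable trajectory), we obtain
\[
\phi'(t) \;=\; \bigl\langle \nabla v(\mathbf{x}_u(t;x)),\,f(\mathbf{x}_u(t;x)) + g(\mathbf{x}_u(t;x))\,u(\mathbf{x}_u(t;x)) \bigr\rangle \;=\; 0,
\]
where the last equality is exactly the hypothesis \eqref{eq:invariance} evaluated at $\mathbf{x}_u(t;x) \in \Omega$. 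Hence $\phi$ is constant on $[0,\infty)$, and in particular $v(\mathbf{x}_u(t;x)) = v(x)$ for every $t \geq 0$.

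To conclude, I would let $t \to \infty$. The asymptotic stabilization property of $(u,\Omega)$ gives $\mathbf{x}_u(t;x) \to 0$ as $t \to \infty$, and continuity of $v$ at the origin combined with the boundary condition \eqref{eq:bc} yields $v(\mathbf{x}_u(t;x)) \to v(0) = 0$. Since $\phi$ is constant, this forces $v(x) = 0$. As $x \in \Omega$ was arbitrary, $v \equiv 0$ on $\Omega$.

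I do not anticipate a genuine obstacle: the argument is essentially a one-line application of the method of characteristics combined with asymptotic stability. The only point that deserves care is ensuring that the trajectory used to transport the boundary condition from the origin out to a generic $x \in \Omega$ really stays inside the domain where $v$ and the PDE are defined --- and this is precisely what the forward invariance clause of Definition~\ref{def:admissible} provides, which is why the admissibility notion was formulated with that clause in the first place.
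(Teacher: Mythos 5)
Your proposal is correct and follows essentially the same route as the paper's own proof: differentiate $v$ along the closed-loop trajectory, use \eqref{eq:invariance} to conclude constancy, and pass to the limit $t\to\infty$ using asymptotic stabilization, continuity of $v$, and $v(0)=0$. No gaps.
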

\begin{proof}
    See Appendix \ref{sec:LemmahomogenSolProof}.
\end{proof}
\noindent Thus, under the assumption of the preceding lemma, any two solutions of the GHJB equation \eqref{eq:GHJB} must coincide. This uniqueness argument is fundamental to the proof of the subsequent  Theorem \ref{thm:GHJB}.  Furthermore, we will  make use of the smoothness of the flow map \(x_u(t;x)\) with respect to the initial state \(x\), as well as local exponential stabilizability estimates for the corresponding derivatives. These results will be stated and proved in the following lemma.
\begin{lemma}\label{lem:exp_decay_higher_derivatives}
  Let { \(l \in \xCinfty(\Omega,\mathbb{R}^N)\)} for a domain $\Omega \subset \mathbb{R}^N$ and consider the ODE
  \begin{equation}\label{eq:ode}
    \dot{\mathbf{x}}(t;x) = l\bigl(\mathbf{x}(t;x)\bigr),
    \quad \mathbf{x}(0) = x \in \Omega,
  \end{equation}
  with a forward invariance property, i.e.
  $\mathbf{x}(t;x) \in \Omega$ for all $t\geq 0$.
  Then, the following statements hold:
  \begin{enumerate}
    \item The flow map is infinitely differentiable in both time and initial state:
  \[
      \mathbf{x}(\,\cdot\,;\, \cdot \,) \in \xCinfty\!\left([0,\infty) \times \Omega,\mathbb{R}^N \right).
    \]
    \item Assume  that \(D l(0)\in \mathbb{R}^{N\times N}\) is Hurwitz. 
    Fix an initial state {\(x \in \Omega\)}   and assume that there exists a $\delta >0$ along with constants \(\,t_0 \ge 0,\; C_0 > 0,\; \mu_0>0\,\)  such that
    \begin{equation}\label{eq:expAssumption}
       \|\mathbf{x}(t;y)\| \le C_0\, e^{-\mu_0\,(t - t_0)} 
       \quad \text{for all}\quad t \ge t_0 \text{ and } y \in \cl{B_{\delta}(x)} \subset \Omega,
    \end{equation}
    where \(\operatorname{cl}\) denotes the closure of a set.
    Then, there exist constants 
    $t' \ge 0$ and $ \mu' > 0$ such that for every multi-index  \(\alpha \in \mathbb{N}_0^N\) it holds that  
    \begin{equation}\label{eq:exp_decay_derivatives}
       \bigl\|\partial^\alpha_y \mathbf{x}(t;y)\bigr\|
       \le C_\alpha\, e^{-\mu'\,(t - t')} 
       \quad \text{for all}\quad t \ge t'\text{ and } y \in \cl{B_{\delta}(x)},
    \end{equation}
     where the constant \( C_\alpha>0\) depends on $\alpha$, but is independent of $y$. 
    \end{enumerate}
 \end{lemma}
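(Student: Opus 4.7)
The plan is to handle the two parts separately. Part (1) is a direct appeal to classical smooth dependence on initial conditions, whereas part (2) requires an induction on the order $|\alpha|$, with the base case being a roughness-of-stability result for linear time-varying systems with an exponentially convergent coefficient matrix.

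For part (1), since $l \in C^{\infty}$ and forward invariance guarantees existence of $\mathbf{x}(\,\cdot\,;y)$ on $[0,\infty)$ for every $y \in \Omega$, the classical smooth dependence theorem (e.g., Hartman, \emph{Ordinary Differential Equations}, Ch.~V) gives joint $C^{\infty}$ regularity of $(t,y) \mapsto \mathbf{x}(t;y)$ on every compact subset of $[0,\infty)\times\Omega$. Since smoothness is a local property, this yields the claim on all of $[0,\infty)\times\Omega$.

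For part (2), I would induct on $|\alpha|$. The case $|\alpha|=0$ is exactly the hypothesis \eqref{eq:expAssumption}. For $|\alpha|=1$, differentiating \eqref{eq:ode} in $y$ gives the variational equation
\[
  \dot{\Phi}(t;y) = A(t;y)\,\Phi(t;y), \qquad A(t;y) := Dl\bigl(\mathbf{x}(t;y)\bigr), \qquad \Phi(0;y) = I.
\]
Since $\mathbf{x}(t;y) \to 0$ exponentially, uniformly in $y \in \cl{B_{\delta}(x)}$, and $l \in C^{\infty}$, we obtain $\|A(t;y) - Dl(0)\| \to 0$ exponentially. Combined with the hypothesis that $Dl(0)$ is Hurwitz, a standard roughness result for uniform exponential stability (e.g., Khalil, \emph{Nonlinear Systems}, Lemma~9.1) yields constants $t' \ge t_0$, $\mu' > 0$, and $C_1 > 0$ with $\|\Phi(t;y)\| \le C_1 e^{-\mu'(t-t')}$ for $t \ge t'$ and all $y \in \cl{B_{\delta}(x)}$. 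In fact, the associated state-transition matrix $\Phi(t;y)\Phi(s;y)^{-1}$ is bounded by $C e^{-\mu'(t-s)}$ for $t \ge s \ge t'$, which will be needed in the inductive step.

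For the induction step, differentiating \eqref{eq:ode} $|\alpha|$ times with respect to $y$ using the Fa\`a di Bruno formula yields a linear inhomogeneous ODE
\[
  \tfrac{d}{dt}\,\partial_y^\alpha \mathbf{x}(t;y) = A(t;y)\,\partial_y^\alpha \mathbf{x}(t;y) + F_\alpha(t;y), \qquad \partial_y^\alpha \mathbf{x}(0;y) = 0,
\]
where $F_\alpha$ is a polynomial in the lower-order derivatives $\{\partial_y^\beta \mathbf{x}(t;y) : 0 < |\beta| < |\alpha|\}$ whose coefficients are derivatives of $l$ evaluated at $\mathbf{x}(t;y)$. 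By the inductive hypothesis each lower-order derivative decays exponentially, and the derivatives of $l$ are bounded along the converging trajectory, so $\|F_\alpha(t;y)\| \le \tilde{C}_\alpha e^{-\tilde{\mu}_\alpha (t-t')}$ for some $\tilde{\mu}_\alpha > 0$. The variation-of-constants formula together with the state-transition estimate from the base case then delivers the desired bound \eqref{eq:exp_decay_derivatives}, possibly with a smaller rate than $\mu_0$, and with $C_\alpha$ independent of $y$ thanks to the uniformity already established at every lower level. The main obstacle is precisely the base case $|\alpha|=1$: transferring the exponential decay of the state into uniform exponential stability of the variational equation is where the Hurwitz assumption on $Dl(0)$ is essential, and careful bookkeeping of the decay rates and of $y$-uniformity over the compact ball $\cl{B_{\delta}(x)}$ is required to avoid losing either exponential decay or uniformity across the induction.
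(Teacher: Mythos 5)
Your proposal is correct and follows essentially the same route as the paper's proof: Part (1) via classical smooth dependence (Hartman), and Part (2) by induction on $|\alpha|$, treating the first variational equation as a vanishing perturbation of the Hurwitz system $\dot{\bar{\mathbf{x}}} = Dl(0)\bar{\mathbf{x}}$ (via Khalil's Lemma 9.1), then using Fa\`a di Bruno, the exponential state-transition bound, and variation of constants for the higher orders. The only bookkeeping you leave implicit — and which the paper spells out — is the construction of a fixed compact set containing all trajectories emanating from $\cl{B_{\delta}(x)}$, on which the derivatives of $l$ are uniformly bounded and $Dl$ is Lipschitz; this is exactly what secures the $y$-uniformity of the constants you correctly flag as the delicate point.
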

\begin{proof}
The proof is provided in Appendix \ref{sec:AppAuxLemma}
\end{proof}
\noindent In the following theorem we establish the smoothness of the VF under the assumption that the corresponding control signal is smooth. This property is essential for the well-posedness of the iterations in Algorithm \ref{algo:PI}: it ensures that the gradient of every VF arising in the iterative process exists, so the subsequent controllers are well-defined.
 \begin{theorem}\label{thm:GHJB}
Suppose that Assumption~\ref{as:data}  holds. Let $\bigl(u,\Omega\bigr)$ be an admissible controller--domain pair for \eqref{eq:MPInfinit} where
$
u \in \xCinfty(\Omega, \mathbb{R}^M ).
$
Then, the following statements hold:
\begin{enumerate}
    \item The VF $v_u$ defined in \eqref{eq:VFu}   satisfies 
    $
    v_u \in \xCinfty(\Omega, \mathbb{R} ).
    $
    \item The function $v_u$ is positive-definite,  the unique solution of the  GHJB equation associated to the controller $u$ and $$\nabla v_u(x) \neq 0 \text{ for all } x \in \Omega \! \setminus\! \{0\}.$$
   \item The updated controller \[
u^+(x) = -\frac{1}{2} R^{-1} g(x)^{\top} \nabla v_u(x).
\]   satisfies \(u^+\in \xCinfty(\Omega,\mathbb{R}^M)\).
\end{enumerate}
\end{theorem}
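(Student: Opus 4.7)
The proof decomposes naturally along the three assertions, with the bulk of the analytic effort concentrated in Part (1). For Part (1) I plan to differentiate the integral representation
\[
v_u(x) \;=\; \int_0^\infty F(t,x)\,\mathrm dt, \qquad F(t,y) \;:=\; h(\mathbf{x}_u(t;y)) + \langle u(\mathbf{x}_u(t;y)),\, R\, u(\mathbf{x}_u(t;y))\rangle,
\]
under the integral sign to arbitrary order. Joint smoothness of the integrand in $(t,y) \in [0,\infty) \times \Omega$ is immediate from Lemma~\ref{lem:exp_decay_higher_derivatives}(1) together with $h, u \in \xCinfty$. The substance of the argument is an integrable-majorant estimate: for each fixed $x \in \Omega$ and every multi-index $\alpha \in \mathbb{N}_0^N$, I need $\sup_{y \in \cl{B_\delta(x)}} |\partial_y^\alpha F(t,y)|$ to be dominated by an $L^1$-function of $t$ on a suitable closed ball $\cl{B_\delta(x)} \subset \Omega$.

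Two structural ingredients produce such majorants. First, since $h \in \xCinfty$ is positive-definite and minimized at $0$, one has $h(0) = 0$ and $\nabla h(0) = 0$; by admissibility also $u(0) = 0$. Hence, in a neighborhood of the origin, $h, u$ and all their derivatives are controlled by powers of $\|y\|$. Second, the Faà-di-Bruno expansion of $\partial_y^\alpha F(t,y)$ is a finite sum of products of such derivatives of $h,u$ evaluated at $\mathbf{x}_u(t;y)$, multiplied by partial derivatives $\partial_y^\gamma \mathbf{x}_u(t;y)$, and Lemma~\ref{lem:exp_decay_higher_derivatives}(2) controls these flow-derivatives with uniform exponential decay. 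The obstacle I expect is to secure the hypothesis \eqref{eq:expAssumption} uniformly in $y$: I would combine the asymptotic-stabilization clause of admissibility with continuous dependence on initial conditions and compactness of $\cl{B_\delta(x)}$ to obtain a common time $t_0$ at which every trajectory starting in $\cl{B_\delta(x)}$ has entered $B_\varepsilon(0)$, and then invoke the local exponential-stability clause together with the semigroup property of the flow to propagate the bound for $t \ge t_0$. The Hurwitz hypothesis on $Dl(0)$ with $l := f + gu$ required by Lemma~\ref{lem:exp_decay_higher_derivatives}(2) follows because local exponential stability of the smooth vector field $l$ at $0$ forces its linearization to be Hurwitz; otherwise only polynomial decay would be available in the center-like directions. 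With these uniform bounds at hand, a standard differentiation-under-the-integral argument yields $v_u \in \xCinfty(\Omega, \mathbb{R})$.

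For Part (2), the GHJB identity is read off from the semigroup property: $\mathbf{x}_u(s; \mathbf{x}_u(t;x)) = \mathbf{x}_u(s+t;x)$ gives $v_u(\mathbf{x}_u(t;x)) = \int_t^\infty F(\tau,x)\,\mathrm d\tau$, and differentiating at $t = 0$ (using the smoothness just obtained) produces $\langle \nabla v_u(x),\, f(x) + g(x) u(x)\rangle = -h(x) - \langle u(x),\, R u(x)\rangle$. Positive-definiteness is immediate: the integrand is non-negative, $v_u(0) = 0$, and if $v_u(x_0) = 0$ for some $x_0 \in \Omega$, then by continuity $h(\mathbf{x}_u(t;x_0)) \equiv 0$ on $[0,\infty)$, which forces $\mathbf{x}_u(\cdot;x_0) \equiv 0$ and hence $x_0 = 0$. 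Uniqueness follows from Lemma~\ref{lemma:LemmahomogenSol} applied to the difference of any two $\xCone$-solutions of the GHJB equation, which then satisfies the homogeneous problem \eqref{eq:invariance}–\eqref{eq:bc}. Finally, non-vanishing of $\nabla v_u$ on $\Omega \setminus \{0\}$ is a direct contradiction argument: if $\nabla v_u(x_0) = 0$ for some $x_0 \ne 0$, the GHJB equation collapses at $x_0$ to $h(x_0) + \langle u(x_0), R u(x_0)\rangle = 0$, contradicting $h(x_0) > 0$.

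Part (3) is then immediate: $u^+ = -\tfrac12\, R^{-1} g(\cdot)^\top \nabla v_u$ is a product of the $\xCinfty$ matrix-valued map $R^{-1} g(\cdot)^\top$ (Assumption~\ref{as:data}) with the $\xCinfty$ map $\nabla v_u$ produced in Part (1), hence lies in $\xCinfty(\Omega, \mathbb{R}^M)$.
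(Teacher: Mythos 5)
Your proposal is correct and follows essentially the same route as the paper: verify the hypotheses of Lemma~\ref{lem:exp_decay_higher_derivatives} uniformly on a closed ball $\cl{B_\delta(x)}$ (common entry time into $B_\varepsilon(0)$ via asymptotic stabilization and compactness, Hurwitz linearization from local exponential stability), then differentiate under the integral sign by dominated convergence using the exponentially decaying flow-derivative bounds as integrable majorants and induct on $|\alpha|$ via the Fa\`a-di-Bruno structure, with Parts (2) and (3) obtained exactly as you describe. The only cosmetic overstatement is that the derivatives of $h$ and $u$ of order at least one are merely bounded near the origin rather than controlled by powers of $\|y\|$; this is harmless because the integrable majorant comes from the exponential decay of the flow derivatives (every Fa\`a-di-Bruno term of order $|\alpha|\ge 1$ contains at least one such factor), while the vanishing of $h$ and $u$ at the origin is only needed for the finiteness of $v_u$ itself.
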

\begin{proof}
First, we verify that all assumptions of Lemma~\ref{lem:exp_decay_higher_derivatives} are satisfied for the closed-loop system under the controller \(u\). 
Let \(x \in \Omega\) be arbitrary. We consider the ODE
\[
  \dot{\mathbf{x}}_{u}(t;x)
  \;=\;
  f\bigl(\mathbf{x}_{u}(t;x)\bigr)
  \;+\;
  g\bigl(\mathbf{x}_{u}(t;x)\bigr)\,
  u\bigl(\mathbf{x}_{u}(t;x)\bigr)
  \;=:\;
  l\bigl(\mathbf{x}_{u}(t;x)\bigr),
\]
with \(\mathbf{x}_{u}(0;x) = x\). By assumption, {\(l\in \xCinfty(\Omega,\mathbb{R}^N)\)} and, due to the forward invariance property associated with \(u\), it follows from Lemma~\ref{lem:exp_decay_higher_derivatives} that
\[
  \mathbf{x}_{u}(\,\cdot\,;\,\cdot\,)
  \;\in\;
  \xCinfty\bigl([0,\infty)\times\Omega,\mathbb{R}^N\bigr).
\]
Moreover, since \(u\) locally exponentially stabilizes the system,  the matrix \(Dl(0)\in\mathbb{R}^{N\times N}\) is Hurwitz; see, for example, \cite[Corollary~4.3]{Khalil2002} for details.
Furthermore, because \(u\) is asymptotically stabilizing, there exists a time \(t_0>0\) such that
\[
  \mathbf{x}_{u}(t_0;x)
  \;\in\;
  B_{\varepsilon}(0),
\]
where $\varepsilon>0$ is the constant introduced in Definition \ref{def:admissible} (Part (5)).
By the continuity of \(\mathbf{x}_{u}\bigl(\,\cdot\,;\,\cdot\bigr)\), we can choose \(\delta>0\) such that the following holds for the entire neighborhood:
\[
  \mathbf{x}_{u}(t_0;y) \;\in\; B_{\varepsilon}(0)
  \quad
  \text{for all } y\in \cl{B_{\delta}(x)}.
\]
Therefore, by the local exponential stabilization property of \(u\), there exist constants \(C>0\) and \(u>0\) such that
\[
  \bigl\|\mathbf{x}_{u}(t;y)\bigr\|
  \;\le\;
  \underbrace{
    C
    \;\max_{\tilde{y}\in \operatorname{cl}\left(B_{\delta}(x)\right)}
    \bigl\|\mathbf{x}_{u}(t_0;\tilde{y})\bigr\|
  }_{=:C_0}
  \,e^{-\mu_0\,(t-t_0)},
  \quad
  \forall\,t\ge t_0,
  \quad
  \forall\,y \in \cl{B_{\delta}(x)}.
\]
Thus, all requirements in Part (2) of Lemma~\ref{lem:exp_decay_higher_derivatives} are satisfied.

\medskip

\noindent \textbf{Part (1).}  Define the VF
    \[
    v_u(y) = \int_{0}^{\infty} \Bigl[ \underbrace{   h(\mathbf{x}_{u}(t; y)) + \left\langle u(\mathbf{x}_{u}(t; y)),Ru(\mathbf{x}_{u}(t; y)) \right\rangle}_{=:r(\mathbf{x}_{u}(t; y))}  \Bigr]  \mathrm d t,
    \]
   and observe that $r \in { \xCinfty(\Omega,\mathbb{R})}$. We first show that $v_u(y)$ is finite for all $y \in \cl{B_{\delta}(x)}$. Indeed,
    \begin{align*}
        v_u(y) &= \int_{0}^{\infty}   r(\mathbf{x}_{u}(t; y))   \mathrm d t \\
        & =\int_{0}^{\infty}  \Bigl[ r(\mathbf{x}_{u}(t; y))-r(0) \Bigr]  \mathrm d t\\
        & \leq L_r \int_{0}^{\infty} \|\mathbf{x}_{u}(t; y)\|   \mathrm d t\\
        & \leq L_r \left(\int_{0}^{t_0} \|\mathbf{x}_{u}(t; y)\| \, dt +C_0  \int_{t_0}^{\infty}   e^{-\mu_0(t-t_0)}  \mathrm d t \right)   < \infty,
    \end{align*}
    where $L_{r}$ is the Lipschitz constant of $r$ on the compact set 
\begin{align*}
\Xi_x :=  \left\{ \mathbf{x}(t;y) \, | \, (t,y) \in [0,t_0] \times \cl{B_{\delta}(x) }\right\} \;\cup\; \cl{B_\varepsilon (0)}  \subset \Omega ,
\end{align*}
which is  discussed in more detail in the proof of Lemma \ref{lem:exp_decay_higher_derivatives}.
Next, we prove that $\partial^{\alpha} v_u$ exists in $x$  and is continuous in $x$ for all multi-indices  $\alpha \in \mathbb{N}_0^N$. Since $x \in \Omega $ was arbitrary, we can then conclude that $v_u\in \xCinfty(\Omega,\mathbb{R})$.  In the case $|\alpha|=1$, for $z\in B_{\frac{\delta}{2}}(x)$,  consider
\begin{align*}
&\lim_{\omega \to 0} \frac{1}{\omega}\left(v_u(z+\omega e_{\alpha})-v_u(z)\right) \\ =& \lim_{\omega \to 0} \frac{1}{\omega}\left(\int_{0}^{\infty}   r(\mathbf{x}_{u}(t; z+\omega e_{\alpha}))   \mathrm d t-\int_{0}^{\infty}   r(\mathbf{x}_{u}(t; z))   \mathrm d t\right) \\
=& \lim_{\omega \to 0} \int_{0}^{\infty}  \frac{1}{\omega} \Bigl[  r(\mathbf{x}_{u}(t; z+\omega e_{\alpha}))   -   r(\mathbf{x}_{u}(t; z))  \Bigr] \mathrm d t,
\end{align*}
where the second equality holds for $|\omega | < \frac{\delta}{2}$ as  both  $v_u(z+\omega e_{\alpha})$ and $v_u(z)$ are finite, since $z+\omega e_{\alpha},z \in B_{\delta}(x) $. By Part (2) of Lemma \ref{lem:exp_decay_higher_derivatives} and the mean value theorem, it follows for $|\omega | < \frac{\delta}{2}$  that 
\begin{align}
  \left| \frac{1}{\omega}\left(  r(\mathbf{x}_{u}(t; z+\omega e_{\alpha}))   -   r(\mathbf{x}_{u}(t; z))  \right) \right| 
   \leq & \max_{\theta \in [0,1]} \left\{ \left| D^{(1)}r(\mathbf{x}_{u}(t; z+ \theta\omega e_{\alpha})) \right| \lVert \partial^{\alpha}_y \mathbf{x}_{u}(t; z+ \theta \omega e_{\alpha}) \rVert \right\} \notag\\
   \leq & \max_{\tilde{y} \in \Xi_x} \left | D^{(1)}r(\tilde{y}) \right | p(t) \label{eq:intergabelFunc}
\end{align}
with \begin{align*}
    p(t):= \begin{cases}
        \max_{(\tilde{t},\tilde{y}) \in [0,t'] \times \operatorname{cl}\left(B_{\delta}(x)\right)} \lVert \partial^{\alpha}_y \mathbf{x}_{u}(\tilde{t};\tilde{y} ) \rVert & \text{ for } t < t' \\
        C_{\alpha}  e^{-\mu'(t-t')} &\text{ for } t \geq  t'.
    \end{cases}
\end{align*}
Since $p$ is  an integrable function and the upper bound in \eqref{eq:intergabelFunc} is independent of $z$, the dominated convergence theorem allows interchanging the limit and the integral, yielding  
\begin{align*}
    \partial^{\alpha} v_u(z) = \int_{0}^{\infty}   D^{(1)}r(\mathbf{x}_{u}(t; z))    \partial^{\alpha}_y \mathbf{x}_{u}(t; z)  \mathrm d t
\end{align*}
for all $z\in B_{\frac{\delta}{2}}(x)$ and  in particular for $x$.
Continuity of $ \partial^{\alpha} v_u$ in $x$ can be proved by sequential continuity:   
Let \((z_n)_{n\in\mathbb N}\subset B_{\delta/2}(x)\) be a sequence with
\(\displaystyle\lim_{n\to\infty} z_n = x\).
Because \(z_n\in B_{\delta/2}(x)\) for every \(n\), we have the uniform bound  
\[
  \bigl|D^{(1)} r\bigl(\mathbf x_u(t;z_n)\bigr)\,
        \partial_y^{\alpha}\mathbf x_u(t;z_n)\bigr|
  \;\le\;
  \max_{\tilde y\in\Xi_x}
       \bigl|D^{(1)} r(\tilde y)\bigr|\;p(t),
  \qquad\text{for all }n\in\mathbb N,
\]
where \(p(t)\) is the integrable function introduced earlier.  
Hence the dominated convergence theorem yields
\begin{align*}
  \lim_{n\to\infty}\partial^{\alpha} v_u(z_n)
    &= \lim_{n\to\infty}
       \int_{0}^{\infty}
         D^{(1)} r\bigl(\mathbf x_u(t;z_n)\bigr)\,
         \partial_y^{\alpha}\mathbf x_u(t;z_n)\mathrm d t \\[4pt]
    &= \int_{0}^{\infty}
         \lim_{n\to\infty}
         D^{(1)} r\bigl(\mathbf x_u(t;z_n)\bigr)\,
         \partial_y^{\alpha}\mathbf x_u(t;z_n)\mathrm d t \\[4pt]
    &= \int_{0}^{\infty}
         D^{(1)} r\bigl(\mathbf x_u(t;x)\bigr)\,
         \partial_y^{\alpha}\mathbf x_u(t;x)\mathrm d t
     \;=\;
       \partial^{\alpha} v_u(x),
\end{align*}
where the penultimate equality uses the continuity of the integrand
\(D^{(1)} r\bigl(\mathbf x_u(t;x)\bigr)\,
 \partial_y^{\alpha}\mathbf x_u(t;x)\) in \(x\).
Thus \(\partial^{\alpha} v_u\) is continuous at \(x\).
Because \(x\in\Omega\) was arbitrary, we conclude that the first-order
derivatives of \(v_u\) exist and are continuous on all of \(\Omega\).

\medskip
\noindent The existence and continuity of all higher--order derivatives of \(v_u\) in \(\Omega\)
are established by induction in the multi--index order \(|\alpha|\).
For the induction step, we repeat the exact same argument used in the base case \(|\alpha|=1\):
the higher--order derivatives of \(r\) are represented through the associated
multilinear forms, the mean--value theorem is applied together with Lemma~\ref{lem:exp_decay_higher_derivatives}
to get an integrable function as a uniform upper bound, and the dominated convergence theorem is used to interchange the limit and the integral. Consequently, we can conclude that { \(v_u \in \xCinfty(\Omega,\mathbb{R})\)}.

\medskip

\noindent \textbf{Part (2).}  
Consider an arbitrary initial state \(x\in\Omega\) and a small interval \(\Delta t > 0\). The VF can be decomposed as:
\begin{align}\label{eq:posVF}
v_u(x) = \int_{0}^{\infty} r\bigl(\mathbf{x}_{u}(t;x)\bigr)\mathrm d t 
       = \int_{0}^{\Delta t} r\bigl(\mathbf{x}_{u}(t;x)\bigr)\mathrm d t + v_u\Bigl(\mathbf{x}_{u}(\Delta t;x)\Bigr). 
\end{align}
Dividing the rearranged equality by \(\Delta t\) and taking the limit as \(\Delta t\to 0\) yields
\[
-\left\langle \nabla v_u\bigl(\mathbf{x}_{u}(0;x)\bigr),\, \deriv{t}\mathbf{x}_{u}(0;x) \right\rangle = r\bigl(\mathbf{x}_{u}(0;x)\bigr).
\]
Since \(\mathbf{x}_{u}(0;x)=x\) and 
\[
\deriv{t}\mathbf{x}_{u}(0;x)=f(x)+g(x)u(x),
\]
it follows that
\[
\left \langle f(x)+g(x)u(x) , \nabla v_u(x) \right \rangle + h(x) + \left \langle u(x) ,  R  u(x) \right \rangle = 0.
\]
Thus, \(v_u\) satisfies the GHJB equation associated to $u$. Uniqueness of the solution follows from Lemma~\ref{lemma:LemmahomogenSol}.
Next, note that for the initial condition $x=0$,
\[
\mathbf{x}_{u}(t;0) = 0, \quad \forall\, t\ge 0,
\]
since \(l(0)=0\). Hence, the VF at the origin is
\[
v_u(0)=\int_{0}^{\infty} r(0)\,\mathrm d t = \int_{0}^{\infty} 0\, \mathrm d t = 0.
\]
Also the other direction holds, as
\begin{align*}
v_u(x)=0 &\;\Rightarrow\; \int_{0}^{\infty} r\bigl(\mathbf{x}_{u}(t;x)\bigr)\mathrm d t=0 \\
         &\;\Rightarrow\; r\bigl(\mathbf{x}_{u}(t;x)\bigr)=0 \quad \text{for all } t\ge 0 \text{, since }r\in \xCinfty(\Omega,\mathbb{R}) \text{ and } r(x)\geq 0 \text{ for all } x \in \Omega \\
         &\;\Rightarrow\; \mathbf{x}_{u}(t;x)=0 \quad \text{for all } t\ge 0\text{, because $r$ is a positive-definite function} \\
         &\;\Rightarrow\; x=\mathbf{x}_{u}(0;x)=0.
\end{align*}
Thus, we deduce that \(v_u(x)=0\) if and only if \(x=0\).
Moreover, from \eqref{eq:posVF} and the strict positivity of $r$ for all \(x\in\Omega\setminus\{0\}\), it follows directly that $v_u$ is positive-definite. \\
Furthermore, assume, for the sake of contradiction, that there exists
\[
  x \in \Omega \setminus \{0\}
  \quad\text{such that}\quad
  \nabla v_u(x) = 0.
\]
Since \(v_u\) satisfies the GHJB equation, we have
\begin{align*}
0 = \bigl\langle f(x) + g(x)\,u(x),\,\nabla v_u(x)\bigr\rangle
      + h(x) + u(x)^\top R\,u(x)
  = h(x) + \bigl\langle u(x),\,R\,u(x)\bigr\rangle,
\end{align*}
and hence \(h(x)=0\), since \(R\) is positive-definite.  For $x\neq 0$, 
this contradicts the assumption that \(h\) is positive-definite.

\medskip
\noindent \textbf{Part (3).} The updated controller is given by
\[
  u^+(x) = -\frac{1}{2}R^{-1}g(x)^{\top} \nabla v_u(x),
\]
and since \(g \in \xCinfty(\Omega,\mathbb{R}^{N\times M})\) and \(v_u \in \xCinfty(\Omega,\mathbb{R})\), it immediately follows that 
$
  u^+ \in \xCinfty(\Omega,\mathbb{R}^M).
$
\end{proof}
\noindent If we assume that \((u,\Omega)\) is an admissible controller--domain pair and,  in contrast to the smoothness hypothesis of the previous theorem, that $u$ satisfies the relaxed regularity assumption  \(u \in \xCzero (\Omega,\mathbb{R}^N)\).  Then, for every \(x \in \Omega\), the VF $v_u$ still fulfills
\begin{align*}
v_u(x) = \int_{0}^{\Delta t} \Bigl[ h\left(\mathbf{x}_{u}(t;x)\right)+  \left\langle   u\left(\mathbf{x}_{u}(t;x)\right), R u\left(\mathbf{x}_{u}(t;x)\right)\right \rangle\Bigr] \mathrm d t + v_u\left(\mathbf{x}_{u}(\Delta t;x)\right).
\end{align*}
Consequently, taking the limit as \(\Delta t \to 0\) yields
\begin{align*}
    \lim_{\Delta t \rightarrow 0} \frac{v_u\left(\mathbf{x}_{u}(t;x)\right)-v_u(x) }{\Delta t} &=  \lim_{\Delta t \rightarrow 0} -\frac{\int_{0}^{\Delta t} h\left(\mathbf{x}_{u}(t;x)\right)+  \left\langle   u\left(\mathbf{x}_{u}(t;x)\right), R u\left(\mathbf{x}_{u}(t;x)\right)\right \rangle \mathrm d t }{\Delta t} \\
    &=-h(x)- \left\langle   u(x), R u(x) \right \rangle.
\end{align*}
Thus, the limit exists and represents the Lie derivative \cite{Lee2003} of \(v_u\) along the vector field defined by \(\mathbf{x}_{u}(t;x)\). Therefore, if the term \(\left\langle f(x)+g(x)u(x),  \nabla v_u(x)\right\rangle\) in the GHJB equation is interpreted in this weak (Lie derivative) sense, a unique solution to the GHJB equation can be guaranteed, even under considerably weaker regularity assumptions on \(u\). This observation may explain why the works \cite{Beard1997,Beard1998} consider only continuity
of the controller to be sufficient. However, a significant difficulty arises when defining the updated controller \(u^+\). For \(u^+\) to be well-defined, more than the existence of the Lie derivative of \(v_u\) is required; full differentiability of \(v_u\) is necessary. As demonstrated in the proof of the preceding theorem, the VF \(v_u\) generally inherits the regularity properties of the controller \(u\). Consequently, since the definition of the updated controller \(u^+\) involves the gradient \(\nabla v_u\), it  exhibits a reduction in regularity compared to \(u\). To ensure that the updated controller \(u^+\) retains the same level of smoothness as \(u\), it is necessary to assume that \(u \in \xCinfty(\Omega,\mathbb{R}^M)\), which in turn guarantees that  \(u^+ \in \xCinfty(\Omega,\mathbb{R}^M)\). This is then essential for iteratively using the results within the PI without any degradation in regularity at each step.
However, a straightforward analysis of the PI algorithm based on Theorem \ref{thm:GHJB} is hindered by the fact that the updated  \((u^+,\Omega)\) does not necessarily form an admissible controller–domain pair. In particular, the control 
 \(u^+\) may generate trajectories that do not lie entirely within the domain \(\Omega\). Consequently, it becomes necessary to construct a refined domain \(\Omega^+ \subset \Omega\) that is forward-invariant under the trajectories 
 \(\mathbf{x}_{u^+}(t;x)\). The methodology for constructing such a domain is presented in the following section.
\section{Admissibility of the updated controller}\label{sec:sec3}
\noindent A fundamental component of the subsequent analysis -- and, in particular, of establishing the admissibility of the updated controller -- is the Lyapunov function theory. We therefore begin by recalling the concept of a Lyapunov function:

\begin{definition}[Lyapunov function]
Let \(\Omega \subset \mathbb{R}^N\) be a domain. A function  \(V \in \xCone(\Omega,\mathbb{R})\) is called a Lyapunov function for the system
\begin{align}\label{eq:dynamicSys}
\dot{\mathbf{x}}_l(t;x) = l\bigl(\mathbf{x}_l(t;x)\bigr), \quad \mathbf{x}(0) = x \in \Omega,
\end{align}
if \(V\) is positive-definite, i.e., \(V(x) > 0\) for all \(x \in \Omega \setminus \{0\}\) and \(V(0) = 0\), and if
\begin{align}\label{eq:Lyapo2}
 \deriv{t}  V(\mathbf{x}_{l}(t;x)) \Big|_{t=0} =   \left\langle l(x) ,  \nabla V(x)\right\rangle < 0 \quad \text{for all } x \in \Omega \setminus \{0\}, 
\;\text{ and }\;
 \left\langle l(0) ,  \nabla V(0)\right\rangle  = 0.
\end{align}
\end{definition}
\noindent Lyapunov functions are a key tool in stability analysis. In particular, if there exists a Lyapunov function for the system
\eqref{eq:dynamicSys}
on a domain \(\Omega \subset \mathbb{R}^N\), then one can guarantee the existence of a subset \(\tilde{\Omega} \subset \Omega\) on which the trajectories \(\mathbf{x}_l(t;x)\) are asymptotically stable, i.e.,
\[
\lim_{t\to\infty} \|\mathbf{x}_l(t;x)\| = 0,\quad \forall\, x \in \tilde{\Omega}.
\]
For a proof of this result, see  {\cite[Theorem 4.1]{Khalil2002}}. Furthermore, one can characterize such a subset precisely in terms of proper sublevel sets of the Lyapunov function.
\begin{definition}[Proper Sublevel Set]
Let \(c>0\).  We call a bounded domain \(\Omega_c \subset \mathbb{R}^N\)  a  proper
sublevel set of a positive-definite function  
\(V \in \xCzero\! \left(\cl{\Omega_c},\mathbb{R}\right)\) if
\begin{enumerate}
  \item \(V(x) < c\) for every \(x \in \Omega_c\);
  \item \(V(x) = c\) for every \(x \in \partial\Omega_c\).
\end{enumerate}
\end{definition}
 \noindent In the next lemma, we collect some statements about proper sublevel sets and Lyapunov functions that we will explicitly use later in the construction of the domains on which the PI is performed.
\begin{lemma}\label{lem:subLevel}
The following statements hold.
\begin{enumerate}
  \item[\textbf{1.}] \textbf{Proper sublevel subsets.}
        Let \(\Omega \subset \mathbb{R}^{N}\) be a bounded domain and
        \(V \in \xCone(\Omega,\mathbb{R})\) a positive-definite function with
        \(\nabla V(x)\neq 0\) for every \(x\in\Omega\setminus\{0\}\).
        For each \(c>0\) define
        \[
          \Omega_c:=\{x\in\Omega\mid V(x)<c\}.
        \]
        If \(\cl{\Omega_c}\subset\Omega\), then
        \(\Omega_c\) is a {proper sublevel set} of \(V,\)  $$  \cl{\Omega_c}=\{x\in\Omega\mid V(x)\leq c\} \text{ and } \partial \Omega_c =\{x\in\Omega\mid V(x)= c\}.$$

  \item[\textbf{2.}] \textbf{Shrinking a proper sublevel set.}
        Let \(\Omega_c\) be a proper sublevel set of a positive-definite
        function \(V\in \xCzero\!\left(\cl{\Omega_c},\mathbb{R}\right)\) and let
        \(\nu\in(0,1)\).
        Then
        \[
          \Omega_{\nu c}:=\left\{x\in\cl{\Omega_c}\mid V(x)<\nu c\right\}
        \]
        is a proper sublevel set of \(V\) and
        \(\cl{\Omega_{\nu c}}\subset\Omega_c\).

  \item[\textbf{3.}] \textbf{Forward invariance and asymptotic stability.}
        Let \(\Omega_c\subset\mathbb{R}^{N}\) be a proper sublevel set of a 
        Lyapunov function
        \(V\in \xCone\!\left(\cl{\Omega_c},\mathbb{R}\right)\) for the dynamical
        system
         \eqref{eq:dynamicSys} and let $l \in \xCone\!\left(\cl{\Omega_c},\mathbb{R}^N\right) $.
        Then \(\Omega_c\) is forward-invariant under the flow of
        \eqref{eq:dynamicSys} and the solution \(\mathbf{x}_l(t;x)\) exists for all \(t \geq 0\). Moreover, for every initial state \(x\in\Omega_c\),
        \[
          \lim_{t\to\infty}\bigl\lVert\mathbf{x}_{l}(t;x)\bigr\rVert=0,
        \]
        and $0 \in \Omega_c$.
\end{enumerate}
\end{lemma}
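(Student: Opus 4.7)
The plan is to prove the three parts in sequence, extracting a gradient--flow tool in Part 1 that Parts 2 and 3 can reuse; throughout, openness, boundedness, and boundary identities reduce to continuity of $V$ together with the defining sublevel inequalities. I start with Part 1. Openness of $\Omega_c$ is immediate from continuity of $V$, boundedness follows from $\Omega_c \subset \Omega$, and $0 \in \Omega_c$ holds because $V(0) = 0 < c$. For connectedness, I will run the gradient flow $\dot{y}(t) = -\nabla V(y(t))$ from an arbitrary $x \in \Omega_c$: since $\cl{\Omega_c} \subset \Omega$ and $\nabla V \neq 0$ away from the origin, $V$ is strictly decreasing along the flow, which traps $y(t)$ inside $\Omega_c$; a standard coercivity/compactness argument on $\cl{\Omega_c}$ then forces $V(y(t)) \to 0$ and, by continuity plus positive--definiteness, $y(t) \to 0$. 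This produces a continuous path in $\Omega_c$ from $x$ to the origin, so $\Omega_c$ is path--connected. The boundary identity $V \equiv c$ on $\partial \Omega_c$ follows because for $x \in \partial \Omega_c \subset \cl{\Omega_c} \subset \Omega$ continuity yields $V(x) \le c$ while $x \notin \Omega_c$ gives $V(x) \ge c$. The descriptions of $\cl{\Omega_c}$ and $\partial \Omega_c$ then follow: one inclusion is continuity; for the reverse, a point $x \in \Omega$ with $V(x) = c$ (necessarily $x \neq 0$, so $\nabla V(x) \neq 0$) is approximated by $x - t\nabla V(x) \in \Omega_c$ for small $t > 0$, since the directional derivative of $V$ there is $-\lVert \nabla V(x) \rVert^2 < 0$.

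\textbf{Part 2.} Since $V = c > \nu c$ on $\partial \Omega_c$, I first observe $\Omega_{\nu c} \subset \Omega_c$ strictly, hence equivalently $\Omega_{\nu c} = \{x \in \Omega_c : V(x) < \nu c\}$; openness in $\mathbb{R}^N$ then follows from continuity of $V$ on the open set $\Omega_c$, boundedness is inherited, and $0 \in \Omega_{\nu c}$ from $V(0) = 0 < \nu c$. The boundary identity $V \equiv \nu c$ on $\partial \Omega_{\nu c}$ and the inclusion $\cl{\Omega_{\nu c}} \subset \Omega_c$ are obtained by replaying the continuity arguments of Part 1: any limit point of $\Omega_{\nu c}$ satisfies $V \le \nu c < c$, which in particular rules out $\partial \Omega_c$, and a point $x$ with $V(x) < \nu c$ is interior to $\Omega_{\nu c}$ because a neighborhood of it in the open set $\Omega_c$ still satisfies $V < \nu c$.

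\textbf{Part 3.} I will apply the classical Lyapunov argument. For $x \in \Omega_c$, let $\mathbf{x}_l(\,\cdot\,;x)$ be the maximal solution. Differentiating along the flow, $\deriv{t} V(\mathbf{x}_l(t;x)) = \langle l(\mathbf{x}_l(t;x)), \nabla V(\mathbf{x}_l(t;x)) \rangle < 0$ as long as the trajectory lies in $\Omega_c \setminus \{0\}$, so $V(\mathbf{x}_l(t;x)) < V(x) < c$, which forbids crossing $\partial \Omega_c$; since $\Omega_c$ is bounded, standard ODE extension theorems give $T_{\max} = \infty$, yielding forward invariance and global existence. The monotone non--increasing map $t \mapsto V(\mathbf{x}_l(t;x))$ converges to some $L \ge 0$; if $L > 0$, the trajectory would remain in the compact set $\{y \in \cl{\Omega_c} : L \le V(y) \le V(x)\}$, which avoids the origin, where $\langle l, \nabla V \rangle \le -\alpha$ for some $\alpha > 0$ by continuity, forcing $V \to -\infty$, a contradiction. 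Hence $L = 0$, and continuity plus positive--definiteness of $V$ on the compact $\cl{\Omega_c}$ upgrade this to $\lVert \mathbf{x}_l(t;x) \rVert \to 0$. Finally $0 \in \Omega_c$ because $V(0) = 0 < c$. The main obstacle is the connectedness claim in Part 2, since under only $V \in \xCzero(\cl{\Omega_c},\mathbb{R})$ a positive--definite continuous function may in principle have disconnected sublevel sets; I would resolve this by noting that in every subsequent application the relevant $V$ is in fact $\xCone$ with $\nabla V \neq 0$ off the origin (for instance the VFs supplied by Theorem \ref{thm:GHJB}), so Part 1's gradient--flow argument can be replayed inside $\cl{\Omega_c}$ to connect every point of $\Omega_{\nu c}$ to the origin.
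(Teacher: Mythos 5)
Your proposal is correct and, in its core arguments, coincides with the paper's proof: Part 1's reverse inclusion $\{V\le c\}\subset\cl{\Omega_c}$ is obtained in both cases by perturbing a level point in the direction $-\nabla V(x)$ and using that the directional derivative equals $-\lVert\nabla V(x)\rVert^{2}<0$ (the paper phrases this via the fundamental theorem of calculus rather than the first-order expansion, which is equivalent); Part 2 is in both cases a continuity case analysis showing boundary points of $\Omega_{\nu c}$ cannot lie on $\partial\Omega_c$ and must satisfy $V=\nu c$; and Part 3 is the standard Lyapunov argument in both (first-exit-time contradiction for invariance, compactness for global existence, and a "decay rate bounded away from zero off a ball around the origin" contradiction for convergence — you phrase it through the monotone limit $L$, the paper through a sequence of excursion times, but these are interchangeable).

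The one genuine point of divergence is connectedness, and here you are more careful than the paper. The definition of a proper sublevel set requires a bounded \emph{domain}, i.e.\ a connected open set, and the paper's proof of Parts 1 and 2 never verifies connectedness; your gradient-flow path from an arbitrary $x\in\Omega_c$ to the origin supplies this for Part 1 (an alternative, avoiding the non-uniqueness of the flow of the merely continuous field $-\nabla V$, is to note that each connected component of $\Omega_c$ has compact closure in $\Omega$, that $V$ cannot attain its minimum over that closure on the component's boundary, and hence attains it at an interior critical point, which must be the origin — so there is only one component). Your closing observation about Part 2 is also accurate: for $V$ that is merely $\xCzero$ on $\cl{\Omega_c}$, the set $\Omega_{\nu c}$ can genuinely be disconnected (already in dimension one), so Part 2 as literally stated does not yield a "domain" without the additional $\xCone$/nonvanishing-gradient regularity that holds in every application in the paper. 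This is a defect of the lemma's statement shared by the paper's own proof, not a gap in your argument, and your proposed repair matches how the lemma is actually invoked in Theorem \ref{thm:PIwell-posed}.
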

\begin{proof}
The proof is provided in Appendix \ref{sec:AppAuxLemmaProperSublevels}
\end{proof}

\noindent  In our context, where the VFs are employed as Lyapunov functions, we assume initially that the domain \(\Omega\) is bounded to guarantee the compactness of $\cl{\Omega_c}$. 
We are now prepared to state and prove the following theorem, which establishes sufficient conditions for defining a set \(\Omega^+ \subset \Omega\) under which \((u^+,\Omega^+)\) forms an admissible controller--domain pair.
\begin{theorem}\label{thm:GHJB2}
Suppose that Assumption~\ref{as:data} holds and that $\Omega\subset \mathbb{R}^N$ is a bounded domain. Let $\bigl(u,\Omega\bigr)$ denote an admissible controller--domain pair for \eqref{eq:MPInfinit}, where
$
u \in \xCinfty(\Omega,\mathbb{R}^M).
$
Then, the following statements hold:
\begin{enumerate}
\item There exists a proper sublevel set $\Omega_c$ with respect to $v_u$.
    \item For every proper sublevel set $\Omega_c \subset \Omega$ of $v_u$,  the pair $\bigl(u_,\,\Omega_c\bigr)$ is an admissible controller--domain pair for \eqref{eq:MPInfinit}.
    \item For every proper sublevel set $\Omega_c \subset \Omega$ of $v_u$, the pair $\bigl(u^+,\,\Omega_c\bigr)$ is an admissible controller--domain pair for \eqref{eq:MPInfinit}.
\end{enumerate}
\end{theorem}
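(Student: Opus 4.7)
The plan is to prove the three parts in order, relying on Theorem~\ref{thm:GHJB} and Lemma~\ref{lem:subLevel} as the main tools. For Part~(1), Theorem~\ref{thm:GHJB} gives that $v_u\in\xCinfty(\Omega,\mathbb{R})$ is positive-definite with $\nabla v_u(x)\neq 0$ on $\Omega\setminus\{0\}$. Since $\Omega$ is open and contains $0$, I would pick $r>0$ with $\cl{B_r(0)}\subset\Omega$ and set $c_0:=\min_{\partial B_r(0)} v_u>0$. For any $c\in(0,c_0)$, the connected component of $\{x\in\Omega:v_u(x)<c\}$ containing the origin cannot reach $\partial B_r(0)$, where $v_u\geq c_0>c$; hence this component, taken as $\Omega_c$, is contained in $B_r(0)$ and satisfies $\cl{\Omega_c}\subset\Omega$. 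Lemma~\ref{lem:subLevel}(1) then identifies $\Omega_c$ as a proper sublevel set of $v_u$.

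\medskip

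Parts~(2) and~(3) share a common skeleton: verify the five conditions of Definition~\ref{def:admissible} by exhibiting $v_u$ as a Lyapunov function for the respective closed-loop vector field on $\cl{\Omega_c}$, and then invoke Lemma~\ref{lem:subLevel}(3) to extract forward invariance, asymptotic stabilization, and $0\in\Omega_c$. For $(u,\Omega_c)$ the decay inequality follows by rearranging the GHJB,
\[
\bigl\langle f(x)+g(x)u(x),\nabla v_u(x)\bigr\rangle=-h(x)-\bigl\langle u(x),R\,u(x)\bigr\rangle<0\quad\text{on }\cl{\Omega_c}\setminus\{0\},
\]
while $u(0)=0$ and local exponential stabilization on a ball sitting inside $\Omega_c$ transfer directly from the admissibility of $(u,\Omega)$. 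For $(u^+,\Omega_c)$ I substitute $g(x)^\top\nabla v_u(x)=-2R\,u^+(x)$ into the $u$-GHJB and complete the square to obtain
\[
\bigl\langle f(x)+g(x)u^+(x),\nabla v_u(x)\bigr\rangle=-h(x)-\bigl\langle u-u^+,R(u-u^+)\bigr\rangle-\bigl\langle u^+,R\,u^+\bigr\rangle\le -h(x),
\]
which is strictly negative on $\cl{\Omega_c}\setminus\{0\}$. Together with $u^+\in\xCinfty$ (Theorem~\ref{thm:GHJB}(3)) and $u^+(0)=0$ (since $\nabla v_u(0)=0$ at the minimum of the positive-definite $v_u$), Lemma~\ref{lem:subLevel}(3) delivers forward invariance and asymptotic stabilization of $\Omega_c$ under the $u^+$-flow.

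\medskip

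The main obstacle is the local exponential stabilization of $u^+$, which is not implied by the Lyapunov argument alone. My plan is to linearize the $u^+$-closed loop at the origin: writing $H:=D^2 v_u(0)$ gives $A_{u^+}=Df(0)-\tfrac12 g(0)R^{-1}g(0)^\top H$. Taylor-expanding the $u$-GHJB to second order produces the Lyapunov equation $A_u^\top H+HA_u=-D^2 h(0)-2\,Du(0)^\top R\,Du(0)$, and since $A_u$ is Hurwitz by the local exponential stabilization of $u$, a detectability argument forces $H$ to be positive-definite. Differentiating the completed-square identity above twice at the origin yields
\[
A_{u^+}^{\top}H+HA_{u^+}=-D^2 h(0)-2\bigl(Du(0)-Du^+(0)\bigr)^{\top}R\bigl(Du(0)-Du^+(0)\bigr)-2\,Du^+(0)^{\top}R\,Du^+(0),
\]
from which standard Lyapunov theory delivers the Hurwitz property of $A_{u^+}$, and \cite[Corollary~4.3]{Khalil2002} then yields local exponential stabilization on a ball $B_{\varepsilon'}(0)\subset\Omega_c$.
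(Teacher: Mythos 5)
Your proposal is correct and follows the same overall architecture as the paper's proof: exhibit $v_u$ as a Lyapunov function for the relevant closed loop, invoke Lemma~\ref{lem:subLevel}(3) for forward invariance, asymptotic stability and $0\in\Omega_c$, obtain the $u^+$-decay via the completed square, and get local exponential stability by linearizing and producing a Lyapunov equation $H A_{u^+}+A_{u^+}^{\top}H=-Q$ with $H,Q\succ 0$. Two sub-steps genuinely differ. First, in Part~(1) the paper shows that the \emph{entire} set $\{x\in\Omega\mid v_u(x)<c\}$ lies in $B_\varphi(0)$ by a dynamical argument ($v_u$ decreases along trajectories, which must cross the sphere), whereas you use a purely topological argument on the connected component containing the origin; your version controls less (points outside the ball could a priori still satisfy $v_u<c$) but suffices for existence, sidesteps the connectedness question that Lemma~\ref{lem:subLevel}(1) glosses over, and requires only a routine adaptation of that lemma to the component (its boundary characterization still goes through since components of open sets are open). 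Second, for the positive-definiteness of $H:=H_{v_u}(0)$ the paper simply asserts it from the minimum property of $v_u$ at the origin (which by itself only yields positive semi-definiteness), while you derive it from the second-order expansion of the $u$-GHJB, i.e.\ $A_u^{\top}H+HA_u=-\bigl(H_h(0)+2\,Du(0)^{\top}R\,Du(0)\bigr)$ with $A_u$ Hurwitz --- a cleaner route. One caveat there: if one follows the paper in reading Assumption~\ref{as:data} as giving $H_h(0)\succ 0$, then $H\succ 0$ is immediate and no detectability argument is needed; if $H_h(0)$ is only positive semi-definite (which is all that positive-definiteness of the \emph{function} $h$ guarantees), then you would need observability of the pair, not mere detectability, to conclude $H\succ 0$ --- but this is a gap the paper's own proof shares rather than one introduced by you.
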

\begin{proof}
\noindent \textbf{Part (1).}
By Theorem \ref{thm:GHJB}, Part (2), we have $\nabla v_u(x) \neq 0$ for every $x \in \Omega \setminus \{0\}$. Hence, by Lemma \ref{lem:subLevel} Part (1), it is enough to show that there exists a constant  $c>0$ such that the sublevel set
    \[
    \Omega_c \;:=\; \{\, x\in\Omega \mid v_u(x) < c \}
    \]
    satisfies $\cl{\Omega_c}\subset \Omega$.
Since $\Omega$ is an open set containing the origin, there exists a constant $\varphi > 0$ such that the closed ball
$
\cl{B_\varphi(0)}  \subset \Omega.
$
Next, select a constant $c\in \mathbb{R}_+$ satisfying
\[
0 < c < \min_{\|x\| = \varphi} v_u(x).
\]
We now argue by contradiction. Suppose that $\Omega_c$ is not contained in $\cl{B_\varphi(0)} $. Then there exists some $x' \in \Omega \setminus \cl{B_\varphi(0)} $ with $v_u(x') < c$. Since the closed ball $\cl{B_\varphi(0)} $ is a neighborhood of the origin and the system dynamics governed by $f + g\,u$ are asymptotically stable, there exists a time $t'>0$ for which the corresponding trajectory satisfies
\[
\|\mathbf{x}_{u}(t';x')\| = \varphi.
\]
Thus, using the definition of the VF, we have
\[
c>v_u(x') = \int_{0}^{t'} r \bigl(\mathbf{x}_{u}(t;x')\bigr)\mathrm d t + v_u\bigl(\mathbf{x}_{u}(t';x')\bigr)\geq  v_u\bigl(\mathbf{x}_{u}(t';x')\bigr) \geq \min_{\|x\| = \varphi} v_u(x) > c,
\]
which is a contradiction. Therefore, we conclude that
\[
\cl{\Omega_c} \subset \cl{B_\varphi(0)} \subset \Omega.
\]

\noindent \textbf{Part~(2).} 
By assumption, $(u,\Omega)$ is an admissible controller--domain pair.  
Because $\Omega_c \subset \Omega$, to obtain an admissible pair $(u,\Omega_c)$
it suffices -- by Lemma~\ref{lem:subLevel}, Part~(3) -- to show that $v_u$ is a
Lyapunov function on $\Omega_c$.  
This is indeed the case: $v_u$ is already a Lyapunov function on the entire
set $\Omega$.  
Theorem~\ref{thm:GHJB}, Part~(2), guarantees that $v_u$ is positive-definite, and
since $v_u$ satisfies the GHJB equation, we have
\[
  \left.\deriv{t}\,v_u\bigl(\mathbf{x}_u(t;x)\bigr)\right|_{t=0}
  \;=\;
  \bigl\langle f(x)+g(x)u(x),\,\nabla v_u(x) \bigr\rangle
  \;=\;
  -\,h(x)\;-\;\langle u(x),\,R\,u(x)\rangle
  \;<\;0,
\]
by Assumption~\ref{as:data}.  Hence, $v_u$ is a Lyapunov function on
$\Omega_c$, and $(u,\Omega_c)$ is therefore an admissible controller--domain pair. Note that the property of local exponential stabilization of $(u,\Omega_c)$ is immediately transferred to $(u,\Omega_c)$, since $\Omega_c$ is an open subset of $\Omega$ that contains the origin.

\medskip

\noindent \textbf{Part (3).}      
We consider the derivative of \(v_u\) along the trajectory \(\mathbf{x}_{u^+}(\cdot;x)\) of the system \(f+g\,u^+\) for \(x \in \Omega_c\). In particular, we have
\[
\deriv{t} v_u\bigl(\mathbf{x}_{u^+}(t;x)\bigr) = \Bigl\langle \nabla v_u\bigl(\mathbf{x}_{u^+}(t;x)\bigr),\, f\bigl(\mathbf{x}_{u^+}(t;x)\bigr) + g\bigl(\mathbf{x}_{u^+}(t;x)\bigr) u^+\bigl(\mathbf{x}_{u^+}(t;x)\bigr) \Bigr\rangle.
\]
Evaluating at \(t=0\) (and noting that \(\mathbf{x}_{u^+}(0;x)=x\)) gives
\[
\deriv{t} v_u\bigl(\mathbf{x}_{u^+}(0;x)\bigr) = \langle \nabla v_u(x),\, f(x) + g(x) u^+(x) \rangle.
\]
Since the pair \((v_u,u)\) satisfies the GHJB equation, we also have
\[
\langle \nabla v_u(x),\, f(x) + g(x) u(x) \rangle = - h(x) - u(x)^\top R\,u(x).
\]
Therefore,
\begin{equation}\label{eq:theoremLypvs}
\deriv{t} v_u\bigl(\mathbf{x}_{u^+}(0;x)\bigr)
=\langle \nabla v_u(x),\, g(x) \bigl(u^+(x)-u(x)\bigr) \rangle - h(x) - u(x)^\top R\, u(x).
\end{equation}
Using the relation 
\[
u^+(x) = -\frac{1}{2} R^{-1} g(x)^\top \nabla v_u(x),
\]
we can rearrange \eqref{eq:theoremLypvs} as follows:
\begin{align}\label{eq:Lyponov2}
\deriv{t} v_u\bigl(\mathbf{x}_{u^+}(0;x)\bigr)\notag 
=& \Bigl\langle -2R\, u^+(x),\, u^+(x) - u(x) \Bigr\rangle - h(x) - u(x)^\top R\, u(x)\notag\\[1mm]
=& -\| R^{1/2}\bigl(u^+(x)-u(x)\bigr) \|^2 - \| R^{1/2}u^+(x) \|^2 - h(x) \;<\; 0.
\end{align}
Since \(v_u\) is positive-definite, it follows that it is a Lyapunov function for the closed-loop
system \(f+g\,u^{+}\).  Hence, by Lemma~\ref{lem:subLevel}, Part~(3), the
sublevel set \(\Omega_c\) is forward-invariant for \(f+g\,u^{+}\); every
trajectory that starts in \(\Omega_c\) converges asymptotically to the origin,
and the origin itself lies in \(\Omega_c\).
Because \(v_u(0)=0\) and \(v_u\) is positive-definite, the first-order
necessary condition for a minimum gives \(\nabla v_u(0)=0\), so
\(u^{+}(0)=0\).
For local exponential stability, we first note that the Hessian matrix \(H_{v_u}(0) \in \mathbb{R}^{N \times N}\) is symmetric and positive-definite, since the positive-definite function \(v_u\) attains its global minimum at the origin. To establish the local exponential stability of the closed-loop system \(f + g\,u^+\), it is necessary, by \cite[Corollary 4.3]{Khalil2002}, to show that the Jacobian matrix
\[
B := D(f+g\,u^+)(0) \in \mathbb{R}^{N\times N}
\]
is Hurwitz. To this end, consider the following Taylor expansions about \(x=0\):
\begin{align*}
\nabla v_u(x) &= H_{v_u}(0)x + O(\|x\|^2), \\
 f(x) + g(x) u^+(x) &= Bx + O(\|x\|^2),\\
 u^+(x) - u(x) &= D(u^+ - u)(0)\,x + O(\|x\|^2),\\
 u^+(x) &= D{u^+}(0)\,x + O(\|x\|^2),\\
 h(x) &= \frac{1}{2} \left\langle   x, H_h(0)x \right \rangle + O(\|x\|^3)
\end{align*}
as $x \to 0$,
where the expansion for \(h\) is valid since \(h\) is a positive-definite function. This further implies that \(H_h(0)\) is positive-definite. Rearranging \eqref{eq:Lyponov2} yields the equality
\[
\langle \nabla v_u(x),\, f(x) + g(x) u^+(x) \rangle + \| R^{1/2}(u^+(x)-u(x)) \|^2 + \| R^{1/2}u^+(x) \|^2 + h(x) = 0,
\]
and substituting the Taylor expansions, we obtain
\begin{align*}
  0 = &\langle H_{v_u}(0)x,\, Bx \rangle\\ &+ \frac{1}{2} \left\langle   x, \left( H_h(0) + 2\left( D(u^+ - u)(0)\right)^{\top} R\,D(u^+ - u)(0) + 2\left( D{u^+}(0)\right)^{\top} R\, D{u^+}(0) \right)x \right \rangle  \\&+ O(\|x\|^3)  
\end{align*}
as $x \to 0$.
Defining
\[
Q := H_h(0) + 2\left(D(u^+ - u)(0)\right)^{\top} R\,D(u^+ - u)(0) + 2 \left(D{u^+}(0)\right)^{\top} R\, D{u^+}(0),
\]
and noting that \(Q\) is positive-definite (since \(H_h(0)\) is positive-definite and both \(\left(D{u^+}(0)\right)^{\top} R\, D{u^+}(0)\) and \(\left(D(u^+ - u)(0)\right)^{\top} R\,\left(D(u^+ - u)(0)\right)\) are positive semidefinite), we deduce that
\[
H_{v_u}(0)^\top B + B^\top H_{v_u}(0) + Q = 0.
\]
Since \(H_{v_u}(0)\) and \(Q\) are positive-definite, and using  {\cite[Theorem 4.6]{Khalil2002}} implies that the matrix \(B\) is Hurwitz. Consequently,  due to  \cite[Corollary 4.3]{Khalil2002}, the closed-loop system \(f + g\,u^+\) is locally exponentially stable.
\end{proof}

\noindent A key observation is that the pair \((u^+, \Omega_c)\) forms an admissible controller--domain pair as the VF \(v_u\) continues to serve as a Lyapunov function for the  dynamics \(f+g\,u^+\). In this context, any proper sublevel set of \(v_u\) in $\Omega$ is sufficient to yield a forward-invariant set for these dynamics. However, for the closed‑loop system governed by the subsequent controller  \[
u^{++}(x) := -\frac{1}{2} R^{-1} g(x)^{\top} \nabla v_{u^+}(x),
\]  \(v_u\) is no longer guaranteed to be a Lyapunov function.
Hence, within the PI scheme, no single proper sublevel set of a previously computed VF remains invariant across all iterations, forcing the computational domain to shrink at each step.

\section{Convergence of the Policy Iteration}

\noindent We now establish both, a well-posedness version of
Algorithm~\ref{algo:PI} for bounded domains and the convergence of the associated VF sequence
\(\{v_{s}\}_{s\in\mathbb{N}}\).
The proof proceeds inductively, invoking
Theorems~\ref{thm:GHJB} and~\ref{thm:GHJB2} at each step.
A technical difficulty arises because, at iteration \(s\in \mathbb{N}\), the GHJB equation
is solved on the forward-invariant open set \(\Omega_{s}\) (here the subscript $s$ denotes now the iteration index and no longer the  sublevel constant) for the dynamics to 
the current feedback law \(u_{s}\), whereas forward invariance of the dynamics for the 
{updated} law \(u_{s+1}\) can be verified only on closed proper sublevel sets
whose boundaries are strictly contained in~\(\Omega_{s}\) (see Part (3) of Theorem \ref{thm:GHJB2}).
To reconcile these domains we apply Lemma~\ref{lem:subLevel}, Parts (1) and (2),
and construct a nested, decreasing family of forward-invariant domains, which
we detail below.
\begin{definition}\label{def:setContruct}
Let \(\Omega_{-1} \subset \mathbb{R}^N\) be a
bounded domain, and let \(\{v_s\}_{s\in\mathbb{N}}\) be a sequence of
positive–definite functions (these will be the VFs generated by PI in the following). For $s\in \mathbb{N}_0$ and $\nu \in (0,1)$, define recursively
\begin{equation}\label{def:c002}
\Omega_{s}^{(\nu)} := \{ x \in \Omega_{s-1} \,|\, v_s(x) < c_s^{(\nu)} \}
\end{equation}
with
\begin{equation}\label{def:c02}
c_s^{(\nu)} := \nu \cdot \sup\left\{ c \in \left(0,(1-\nu)^{-1}\right] \,\middle|\,\cl{ \left\{x \in \Omega_{s-1} \, | \, v_s(x) < c\right\} } \subset \Omega_{s-1} \right\}
\end{equation}
and
\begin{align}\label{def:c03}
    \Omega_{s}= \bigcup_{\nu\in(0,1)}\Omega_{s}^{(\nu)}.
\end{align}
\end{definition}

  \setcounter{algorithm}{1}
\begin{algorithm}[H]
\caption{Domain–aware policy iteration (bounded setting)}\label{algo:PIBounded}
\begin{flushleft}
\textbf{Input:}\\[2pt]
\hspace*{1.5em}%
initial feedback \(u_{0}\colon\Omega_{-1}\to\mathbb{R}^{M}\); bounded domain \(\Omega_{-1}\subset\mathbb{R}^{N}\); tolerance \(\varepsilon>0\)\\[6pt]
\textbf{Initialisation:}\\[2pt]
\hspace*{1.5em}%
set \(v_{-1}\equiv 0\), \(e_{0}:=\varepsilon+1\), and \(s:= 0\) \\[6pt]
\textbf{Main loop:}  \\[2pt]
\textbf{while} \(e_{s}>\varepsilon\) \textbf{do}\\
\hspace*{1.5em}%
\textbf{(1)}\; solve
\(\mathrm{GHJB}\bigl(v_{s},u_{s}\bigr)=0\) on \(\Omega_{s-1}\) with \(v_{s}(0)=0\) for the value function $v_s$;\\
\hspace*{1.5em}%
\textbf{(2)}\; set
\(u_{s+1}(x):=-\tfrac12 R^{-1}g(x)^{\!\top}\nabla v_{s}(x)\);\\
\hspace*{1.5em}%
\textbf{(3)}\; compute the error
\(e_{s+1}:=\lVert v_{s}-v_{s-1}\rVert\);\\
\hspace*{1.5em}%
\textbf{(4)}\; determine \(\Omega_{s}\) by \eqref{def:c002}--\eqref{def:c03} using $v_s$;\\
\hspace*{1.5em}%
\textbf{(5)}\; increment the counter \(s:= s+1\).\\
\textbf{end while}\\[6pt]
\textbf{Output:}\\[2pt]
\hspace*{1.5em}%
approximate optimal value function \(v_{\,s-1}\), feedback \(u_{s}\) and domain $\Omega_{s-1}$.
\end{flushleft}
\end{algorithm}

\noindent With the nested domain construction of Definition~\ref{def:setContruct} at
our disposal, we can refine Algorithm~\ref{algo:PI} so that, at every iteration, GHJB equation is solved on an explicitly
specified region that is provably forward–invariant under the dynamics using current
feedback controller.  The resulting domain-aware PI scheme is
presented in Algorithm~\ref{algo:PIBounded}.
Invoking Theorems~\ref{thm:GHJB} and~\ref{thm:GHJB2}, the next result shows
that Algorithm~\ref{algo:PIBounded} is well-posed under the hypotheses
enumerated therein.

\setcounter{theorem}{2}
\begin{theorem}\label{thm:PIwell-posed}
Let Assumption~\ref{as:data} be satisfied and let \(\Omega_{-1}\subset\mathbb{R}^{N}\) be a bounded domain.
Suppose the initial feedback satisfies
\(u_{0}\in\xCinfty \! \left(\Omega_{-1},\mathbb{R}^{M}\right) \) and that the pair
\(\bigl(u_{0},\Omega_{-1}\bigr)\) constitutes an admissible controller–domain pair
 for~\eqref{eq:MPInfinit}.  
Then the sequence $\{(u_{s},\Omega_{s-1})\}_{s\in\mathbb{N}}$
generated by Algorithm~\ref{algo:PIBounded} satisfies, for every 
$s\in\mathbb{N}_0$,
\begin{enumerate}
  \item $\Omega_{s-1}\subset\mathbb{R}^{N}$ is non-empty;
  \item $u_{s}\in\xCinfty\!\bigl(\Omega_{s-1},\mathbb{R}^{M}\bigr)$;
  \item the pair $(u_{s},\Omega_{s-1})$ is an admissible controller--domain pair for
        problem~\eqref{eq:MPInfinit}.
\end{enumerate}
Consequently, the iterations of Algorithm~\ref{algo:PIBounded} are
well-defined.
\end{theorem}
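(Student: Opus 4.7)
The plan is to proceed by induction on the iteration index $s \in \mathbb{N}_0$. The base case $s=0$ is immediate from the hypotheses of the theorem: $\Omega_{-1}$ is a non-empty bounded domain, $u_0 \in \xCinfty(\Omega_{-1}, \mathbb{R}^M)$, and $(u_0, \Omega_{-1})$ is admissible by assumption.

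For the inductive step, I assume the three properties hold at stage $s$. Then $(u_s, \Omega_{s-1})$ is an admissible controller--domain pair with $u_s$ smooth, so Theorem \ref{thm:GHJB} applies to Step (1) of Algorithm \ref{algo:PIBounded}: it produces a unique $v_s \in \xCinfty(\Omega_{s-1}, \mathbb{R})$ solving $\mathrm{GHJB}(v_s,u_s)=0$, with $v_s$ positive-definite and $\nabla v_s(x) \neq 0$ for every $x \in \Omega_{s-1}\setminus\{0\}$. Step (2) then yields $u_{s+1} \in \xCinfty(\Omega_{s-1}, \mathbb{R}^M)$, whence $u_{s+1} \in \xCinfty(\Omega_s, \mathbb{R}^M)$ will follow by restriction once $\Omega_s \subset \Omega_{s-1}$ has been exhibited. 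This disposes of property (2) at stage $s+1$.

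The substantive part of the argument is Step (4), the construction of $\Omega_s$ via Definition \ref{def:setContruct}. I would first verify that $c_s^{(\nu)} > 0$ for every $\nu \in (0,1)$: Theorem \ref{thm:GHJB2}, Part (1), applied to $(u_s, \Omega_{s-1})$ provides some $c^{\ast} > 0$ with $\cl{\{x \in \Omega_{s-1} \mid v_s(x)<c^{\ast}\}} \subset \Omega_{s-1}$, so the set over which the supremum in \eqref{def:c02} is taken contains $\min\{c^{\ast},(1-\nu)^{-1}\}$ and is non-empty. Since $c_s^{(\nu)}$ is $\nu$ times that supremum, it lies strictly below it, and the monotonicity $\{v_s<c\} \subset \{v_s<c'\}$ for $c<c'$ translates this strict inequality into $\cl{\Omega_s^{(\nu)}} \subset \Omega_{s-1}$. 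Lemma \ref{lem:subLevel}, Part (1), then certifies that each $\Omega_s^{(\nu)}$ is a proper sublevel set of $v_s$, and Theorem \ref{thm:GHJB2}, Part (3), delivers admissibility of $(u_{s+1}, \Omega_s^{(\nu)})$ for every $\nu \in (0,1)$.

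It remains to lift admissibility to the union $\Omega_s = \bigcup_{\nu \in (0,1)} \Omega_s^{(\nu)}$. Openness is automatic; non-emptiness and connectedness follow because every $\Omega_s^{(\nu)}$ contains the origin (as $v_s(0)=0 < c_s^{(\nu)}$), so $\Omega_s$ is a domain with $0 \in \Omega_s$. The remaining clauses of Definition \ref{def:admissible} transfer pointwise: $u_{s+1}(0)=0$ follows from $\nabla v_s(0)=0$; any trajectory of $u_{s+1}$ starting at $x \in \Omega_s$ lies in some single $\Omega_s^{(\nu)} \subset \Omega_s$ and therefore stays forward-invariant and decays to the origin; and local exponential stability is inherited from any $\Omega_s^{(\nu)}$ that contains the neighborhood produced by Theorem \ref{thm:GHJB2}, Part (3). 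The main obstacle I anticipate is precisely the bookkeeping around the supremum in \eqref{def:c02} -- namely, showing that the contraction factor $\nu \in (0,1)$ indeed enforces $\cl{\Omega_s^{(\nu)}} \subset \Omega_{s-1}$ -- because this is the strict inequality on which the hypotheses of Theorem \ref{thm:GHJB2}, Part (3), rely at every iteration.
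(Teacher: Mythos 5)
Your proposal is correct and follows essentially the same route as the paper: induction on $s$, with Theorem~\ref{thm:GHJB} handling the policy-evaluation and smoothness claims, Theorem~\ref{thm:GHJB2} Parts~(1) and~(3) together with Lemma~\ref{lem:subLevel} handling the domain construction, and a pointwise argument lifting admissibility from the individual $\Omega_s^{(\nu)}$ to their union. The only (harmless) deviation is in certifying that $\Omega_s^{(\nu)}$ is a proper sublevel set: you use monotonicity of sublevel sets in $c$ plus Lemma~\ref{lem:subLevel} Part~(1), whereas the paper approximates the supremum by a sequence $\xi_n$ and invokes Part~(2) of that lemma -- both are valid and yield the same conclusion.
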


\begin{proof}
We argue by induction on $s\in\mathbb{N}_{0}$.  
Define the induction hypothesis
\[
\mathcal{P}(s):\quad
\Omega_{s-1}\neq\emptyset,\;
u_{s}\in\xCinfty\!\bigl(\Omega_{s-1},\mathbb{R}^{M}\bigr),\;
\text{and}\;
(u_{s},\Omega_{s-1})\text{ is an admissible controller--domain pair}.
\]
\textbf{Base case $s=0$.} The hypothesis $\mathcal{P}(0)$ holds by the assumptions on the initialization of the algorithm.\\
\textbf{Induction step $s\mapsto s+1$.} Suppose that $\mathcal{P}(s)$ holds for some $s\in\mathbb{N}_{0}$. In Step~4 of Algorithm~\ref{algo:PIBounded}, using $\mathcal{P}(s)$, Theorem~\ref{thm:GHJB} Part (2) yields the unique positive\nobreakdash-definite solution $v_{s}\in\xCinfty\!\bigl(\Omega_{s-1},\mathbb{R}\bigr)$ of the GHJB equation associated with $u_{s}$, and Step~5 provides the updated policy $u_{s+1}\in\xCinfty\!\bigl(\Omega_{s-1},\mathbb{R}^{M}\bigr)$.\\
In Step~8 we determine the new domain $\Omega_{s}$. Because, for $\nu<1$ sufficiently close to $1$, Part~(1) of Theorem~\ref{thm:GHJB2} guarantees that
\[
\bigl\{\,c\in(0,(1-\nu)^{-1}] \bigm| \cl{\{x\in\Omega_{s-1}\mid v_{s}(x)<c\}}\subset\Omega_{s-1}\bigr\}\neq\emptyset,
\]
 making the supremum in~\eqref{def:c02}  well-defined and $\Omega_{s}\neq\emptyset$. Note that $\Omega_{s}$, as a union of open sets, is itself open. \\
Next, we verify that $(u_{s+1},\Omega_{s})$ is an admissible controller--domain pair. Pick an arbitrary $x\in\Omega_{s}$. By construction there exists $\nu_{x}\in(0,1)$ such that $x\in\Omega_{s}^{(\nu_{x})}$. We now show that $\Omega_{s}^{(\nu_{x})}$ is a proper sublevel set of $v_{s}$: Indeed, denoting by $\xi$ the supremum in~\eqref{def:c02}, we can choose a sequence $(\xi_n)_{n \in \mathbb{N}} \subset (0,1)$ with $\lim_{n \rightarrow \infty}\xi_{n}=\xi$ and 
\[
\cl{\{x\in\Omega_{s-1}\mid v_{s}(x)<\xi_{n}\}}\subset\Omega_{s-1}\qquad(n\in\mathbb{N}),
\]
so that each set $\{x\in\Omega_{s-1}\mid v_{s}(x)<\xi_{n}\}$ is a proper sublevel set of $v_s$ by Lemma~\ref{lem:subLevel} Part (1). For $n$ large enough the scaling factor $\tilde{\nu}:=\nu_{x}\xi/\xi_{n}$ satisfies $\tilde{\nu}<1$, and Lemma~\ref{lem:subLevel} Part (2)  (with $\tilde\nu$ playing the role of $\nu$ and $\xi_{n}$ playing the role of $c$ in that lemma)  thus  implies that $\Omega_{s}^{(\nu_{x})}$ is a proper sublevel set of $v_{s}$.\\
Furthermore, since $\Omega_{s}^{(\nu_{x})}$ is a proper sublevel set of $v_s$, Theorem~\ref{thm:GHJB2} Part (3) asserts that $(u_{s+1},\Omega_{s}^{(\nu_{x})})$ is an admissible controller–domain pair. As $x\in\Omega_{s}$ was arbitrary and $\Omega_{s}^{(\nu_{x})}\subset\Omega_{s}$, it follows that $(u_{s+1},\Omega_{s})$ is an admissible controller--domain pair. Hence $\mathcal{P}(s+1)$ holds and thus, by induction, $\mathcal{P}(s)$ is true for every $s\in\mathbb{N}_{0}$.
\end{proof}

\noindent By construction, the sequence of sets $\{\Omega_s\}_{s\in\mathbb{N}}$ is nested: $\Omega_{s+1}\subset \Omega_s$ for every $s\in\mathbb{N}$, since $\Omega_{s+1}^{(\nu)} \subset \Omega_{s}$ for all $\nu \in (0,1)$.  Hence, we define the limiting domain
\[
  \Omega_{\infty}:=\bigcap_{s\in\mathbb{N}_0}\Omega_s,
\]
which represents the largest domain on which all iterates $v_s$ are simultaneously well-defined and smooth. With the limiting domain $\Omega_{\infty}$ established, we are now prepared to state and prove the first convergence theorem for PI on bounded domains.

\begin{theorem}\label{thm:exactPI}
Suppose that Assumption~\ref{as:data} holds and that $\Omega\subset \mathbb{R}^N$ is a bounded domain. Let $\bigl(u_0,\Omega\bigr)$ be an admissible controller--domain pair for \eqref{eq:MPInfinit} with
$
 u_0 \in \xCinfty(\Omega,\mathbb{R}^M). 
$
Then for the sequence of VFs $\{v_s\}_{s\in \mathbb{N}}$ produced by Algorithm \eqref{algo:PIBounded}, the following statements are true:
\begin{enumerate}
    \item For every $s\in\mathbb{N}_0$ and for all $x\in\Omega_s$ it holds
    \begin{equation}
     \label{eq:Beh:mono}   
    v^*(x) \le v_{s+1}(x) \le v_s(x).
    \end{equation}

    \item Suppose there exists a domain $\Omega^* \subset \Omega_{\infty}$, on which the optimal controller $u^*$   exists and under which $u^*$ is forward-invariant, and that for every compact subset $K\subset \Omega^*$, there exist positive constants   
    $C_{\nabla v,K}>0$ and $C_{H_v,K}>0$ such that  the uniform bounds
    \begin{equation}\label{eq:uniformBoundGrad}
    \max_{x\in K}\|\nabla v_s(x)\| \le C_{\nabla v,K} \quad \text{and} \quad \max_{x\in K}\|H_{v_s}(x)\|_2 \le C_{H_v,K}
    \end{equation}
    are satisfied for all $s\in\mathbb{N}_0$, where $H_{v_s}$ denote the Hessian of $v_s$. Then the following  statements hold:
    \begin{itemize}
        \item[(i)]  $v^* \in \xCone(\Omega^*,\mathbb{R}), \quad    \lim_{s\to\infty} \|v_s - v^*\|_{\xCone(K,\mathbb{R})} = 0$;
        \item[(ii)] $ \lim_{s\to\infty} \|u_s - u^*\|_{\xCzero(K,\mathbb{R}^M)} = 0$.
    \end{itemize}

\end{enumerate}
\end{theorem}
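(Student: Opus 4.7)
For Part~(1), I plan to exploit the Lyapunov identity~\eqref{eq:Lyponov2} from the proof of Theorem~\ref{thm:GHJB2}: applied along the $u_{s+1}$-trajectory at an arbitrary time rather than only at $t=0$, it yields
$$\deriv{t}\,v_s\bigl(\mathbf{x}_{u_{s+1}}(t;x)\bigr) = -\bigl\lVert R^{1/2}(u_{s+1}-u_s)\bigr\rVert^{2} - \bigl\lVert R^{1/2}u_{s+1}\bigr\rVert^{2} - h$$
evaluated at $\mathbf{x}_{u_{s+1}}(t;x)$. Integrating over $[0,\infty)$, using Theorem~\ref{thm:PIwell-posed} (asymptotic stability of $u_{s+1}$ on $\Omega_s$) together with $v_s(0)=0$, produces the telescoping identity
$$v_s(x) - v_{s+1}(x) = \int_0^{\infty}\bigl\lVert R^{1/2}(u_{s+1}-u_s)\bigl(\mathbf{x}_{u_{s+1}}(t;x)\bigr)\bigr\rVert^{2}\,\mathrm{d}t \;\ge\;0,$$
giving $v_{s+1}\le v_s$ on $\Omega_s$. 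The bound $v^*\le v_{s+1}$ is immediate: $u_{s+1}$ is an admissible feedback whose cost equals $v_{s+1}(x)$, so it cannot drop below the infimum $v^*(x)$.

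For Part~(2), the monotonicity from Part~(1) combined with $v^*\ge 0$ forces pointwise convergence $v_s(x)\downarrow\tilde v(x)\ge v^*(x)$ on $\Omega^*$. On any compact $K\subset\Omega^*$, the uniform bounds~\eqref{eq:uniformBoundGrad} make $\{v_s\}$ and $\{\nabla v_s\}$ equi-Lipschitz (after, if necessary, thickening $K$ slightly to a set of convenient geometry still contained in $\Omega^*$). Arzel\`a--Ascoli, combined with uniqueness of the pointwise limit, promotes $v_s\to\tilde v$ to uniform convergence on $K$, and the standard theorem on uniform convergence of derivatives identifies every subsequential uniform limit of $\{\nabla v_s\}$ with $\nabla\tilde v$, forcing $\tilde v\in\xCone(K,\mathbb{R})$ and uniform convergence $\nabla v_s\to\nabla\tilde v$ on $K$. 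In particular, $u_{s+1}=-\tfrac12 R^{-1}g^\top\nabla v_s\to \tilde u := -\tfrac12 R^{-1}g^\top\nabla\tilde v$ uniformly on $K$.

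It remains to identify $\tilde v$ with $v^*$. Passing to the limit in $\mathrm{GHJB}(v_s,u_s)=0$ and using the explicit formula for $\tilde u$ reduces the limiting identity to the \eqref{eq:HBJ2} equation for $\tilde v$ on $\Omega^*$. Along the optimal trajectory $\mathbf{x}^*(\cdot;x):=\mathbf{x}_{u^*}(\cdot;x)\subset\Omega^*$, the fact that $\tilde u$ is the pointwise minimiser of the Hamiltonian gives
$$\deriv{t}\tilde v\bigl(\mathbf{x}^*(t;x)\bigr)\;\ge\;-h\bigl(\mathbf{x}^*(t;x)\bigr) - \bigl\langle u^*(\mathbf{x}^*(t;x)),\,R\,u^*(\mathbf{x}^*(t;x))\bigr\rangle.$$
Integrating over $[0,T_n]$ along a sequence $T_n\to\infty$ with $h(\mathbf{x}^*(T_n;x))\to 0$ -- such a sequence exists since $v^*(x)<\infty$ forces $\int_0^\infty h(\mathbf{x}^*(t;x))\,\mathrm{d}t<\infty$ -- and then invoking positive-definiteness of $h$ together with continuity of $\tilde v$ at the origin (where $\tilde v(0)=0$) yields $\tilde v(x)\le v^*(x)$, hence $\tilde v=v^*$ on $\Omega^*$. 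This settles~(i), and~(ii) is then immediate from the uniform convergence $u_{s+1}\to u^*$ on $K$ proved above. The main obstacle I anticipate is this final identification step: the passage to the limit in GHJB is routine once uniform $C^1$-convergence is in hand, but one must carry out the inequality $\tilde v(x)\le v^*(x)$ without assuming any a priori decay rate of $\mathbf{x}^*(T;x)$, so the subsequence extraction relying solely on finiteness of the optimal cost and positive-definiteness of $h$ has to be handled with some care.
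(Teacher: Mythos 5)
Your proposal is correct and follows essentially the same route as the paper's proof: the Lyapunov identity \eqref{eq:Lyponov2} integrated along the $u_{s+1}$-trajectory for monotonicity, the cost-versus-infimum comparison for $v^*\le v_s$, Arzel\`a--Ascoli plus monotone pointwise convergence for the $\xCone$-limit, and passage to the limit in the GHJB equation followed by the Hamiltonian-minimizer inequality integrated along the optimal trajectory to identify the limit with $v^*$. Your only deviation is the subsequence $T_n$ with $h(\mathbf{x}^*(T_n;x))\to 0$ extracted from finiteness of the optimal cost, which is a slightly more careful treatment of the terminal limit than the paper's direct assertion that $\mathbf{x}^*(t;x)\to 0$.
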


\begin{proof}
\noindent \textbf{Part (1).} Fix an arbitrary initial state $x\in\Omega_s$ and consider the trajectory $\mathbf{x}_{u_{s+1}}(t;x)$ corresponding to the controller $u_{s+1}$. Recall from \eqref{eq:Lyponov2} that for all \(t\ge0\),
\begin{align*}
&\deriv{t} v_{s}\bigl(\mathbf{x}_{u_{s+1}}(t;x)\bigr) \\ 
&= -\| R^{1/2}\bigl(u_{s+1}(\mathbf{x}_{u_{s+1}}(t;x))-u_{s}(\mathbf{x}_{u_{s+1}}(t;x))\bigr) \|^2 - \| R^{1/2}u_{s+1}(\mathbf{x}_{u_{s+1}}(t;x)) \|^2 - h(\mathbf{x}_{u_{s+1}}(t;x)).
\end{align*}
By integrating over the interval $[0,T]$, we obtain
\begin{equation}
\label{Beh:1}
\begin{split}
    & v_s\bigl(\mathbf{x}_{u_{s+1}}(T;x)\bigr) - v_s\bigl(\mathbf{x}_{u_{s+1}}(0;x)\bigr) \\
 = & \int_0^T \Bigl[-\| R^{1/2}\bigl(u_{s+1}(\mathbf{x}_{u_{s+1}}(t;x))-u_s(\mathbf{x}_{u_{s+1}}(t;x))\bigr)\|^2 
 +\deriv{t}\Bigl(v_{s+1}\bigl(\mathbf{x}_{u_{s+1}}(t;x)\bigr)\Bigr) \Bigr]\mathrm d t.
 \end{split}
\end{equation}
Further, using the facts that 
$v_s\bigl(0\bigr)=v_{s+1}\bigl(0\bigr)=0$, that $u_{s+1}$ asymptotically stabilizes the system on $\Omega_s$ by Theorem \ref{thm:PIwell-posed}, and taking the limit $T\to\infty$ on both sides of \eqref{Beh:1}, yields
\[
-v_s(x) + v_{s+1}(x) = \int_0^\infty \Bigl[-\|R^{1/2}(u_{s+1}(\mathbf{x}_{u_{s+1}}(t;x))-u_s(\mathbf{x}_{u_{s+1}}(t;x)))\|^2\Bigr]\mathrm d t \le 0.
\]
Thus, we can conclude that
\[
v_{s+1}(x) \le v_s(x) \quad \text{for all } x\in \Omega_s,
\]
as $x \in \Omega_s$ was arbitrary. In addition, the definition of $v_s(x)$ as the cost associated with the control $u_s$ yields
\begin{align*}
v_s(x) &= \int_0^\infty \Bigl[h\bigl(\mathbf{x}_{u_s}(t;x)\bigr) + \|R^{1/2}u_s(\mathbf{x}_{u_s}(t;x))\|^2\Bigr]\mathrm d t\\
 &\ge \inf_{\mathbf{u} \in \mathcal{U}_{\infty}} \int_0^\infty \Bigl[h\bigl(\mathbf{x}_{\mathbf{u}}(t;x)\bigr) + \|R^{1/2}\mathbf{u}(t)\|^2\Bigr]\mathrm d t = v^*(x),
\end{align*}
which completes the proof of Part (1).

\medskip
\noindent \textbf{Part (2).} For any compact subset $K\subset \Omega^* \subset \Omega_\infty$, define the families
\[
\mathcal{F}_K^0 := \{ v_s|_K \in \xCinfty(K,\mathbb{R}) \,:\, s \in \mathbb{N}_0 \}
\]
and, for each multi-index $\alpha\in\mathbb{N}^N$ with $|\alpha|=1$,
\[
\mathcal{F}_K^\alpha := \{ \partial^\alpha v_s|_K \in  \xCinfty(K,\mathbb{R}^N)  \,:\, s \in \mathbb{N}_0 \}.
\]
By Part (1), the sequence $\{v_s(x)\}_{s \in \mathbb{N}_0}$ is monotonically decreasing (pointwise) and thus bounded from above by $v_0(x)$ on $K$. Moreover, the uniform bounds on $\nabla v_s(x)$ and on the Hessian $H_{v_s}(x)$, together with the mean value theorem, ensure that the families $\mathcal{F}_K^0$ and $\mathcal{F}_K^\alpha$ are equicontinuous. By the Arzelà–Ascoli theorem, every sequence in $\mathcal{F}_K^0$ (and in $\mathcal{F}_K^\alpha$) admits a uniformly convergent subsequence. In particular, there exists a subsequence $\{v_{s_\ell}\}_{\ell\in\mathbb{N}}$ and limit functions $\tilde{v}_\infty\in \xCzero(K)$ and $\tilde{v}^{(\alpha)}_\infty\in \xCzero(K)$ (for each $|\alpha|=1$) such that
\[
\lim_{l\to\infty}\|v_{s_\ell} - \tilde{v}_\infty\|_{\xCzero(K)} = 0 \quad \text{and} \quad \lim_{l\to\infty}\|\partial^\alpha v_{s_\ell} - \tilde{v}^{(\alpha)}_\infty\|_{\xCzero(K)} = 0.
\]
Thus, it follows $\tilde{v}_\infty \in   C^{1}(K,\mathbb{R}) $ with  $\partial^\alpha \tilde{v}_{\infty}=  \tilde{v}^{(\alpha)}_\infty$.  As this holds for all compact sets $K \subset \Omega^*$, we can infer  that $\tilde{v}_\infty \in   C^{1}(\Omega^*,\mathbb{R}) $. 
Furthermore, since the pointwise limit 
\[
v_\infty(x):=\lim_{s\to\infty}v_s(x)
\]
exists (by monotonicity and boundedness) for all $x\in K$, uniqueness of limits implies that $\tilde{v}_\infty(x)=v_\infty(x)$ for every $x\in K$.  Thus $v_{\infty} \in \xCzero(K,\mathbb{R})$ and by using  Dini's theorem (which applies due to the monotonicity and continuity of the limit), we can conclude that  the convergence of the entire sequence $\{v_s(x)\}_{s \in \mathbb{N}_0}$  is uniform on $K$. That is 
\[
\lim_{s\to\infty}\|v_s - v_\infty\|_{\xCzero(K)} = 0.
\]
Therefore, every convergent subsequence in $\mathcal{F}_K^\alpha$ must have the same limit function $\partial^\alpha v_{\infty}$ by the uniqueness of the limit and the fact that   $v_\infty \in \xCone(K,\mathbb{R})$. Thus, it follows
\begin{align}\label{eq:inProofvinfty}
   \lim_{s\to\infty}\|v_s - v_\infty\|_{\xCone(K,\mathbb{R})} = 0 .
\end{align}
\noindent Next, we establish that  \(v_\infty\) coincides 
  with the optimal VF $v^*$.  Using \eqref{eq:Beh:mono} from the first part, and by definition, we have
\begin{equation}
\label{eq:Beh:ineq}
    v_\infty(x)=\lim_{s\to\infty}v_s(x) \geq v^*(x).
\end{equation}
Hence, it remains only to show the reverse inequality. From \eqref{eq:inProofvinfty} and the fact that
\[
\langle \nabla v_s(x),\, f(x) + g(x)\,u_s(x)\rangle + h(x) + u_s(x)^\top R\, u_s(x)=0,
\]
with
\[
u_s(x) = -\frac{1}{2} R^{-1}g(x)^\top \nabla v_{s-1}(x)
\]
for all \(x \in \Omega^* \subset \Omega_\infty\), it follows that
\[
\langle \nabla v_\infty(x),\, f(x) + g(x)\,u_\infty(x)\rangle + h(x) + u_\infty(x)^\top R\, u_\infty(x)=0,
\]
where
\[
u_\infty(x) = -\frac{1}{2} R^{-1}g(x)^\top \nabla v_\infty(x) \quad \text{ for all } x \in \Omega^*. 
\]
 For a fixed $x \in \Omega^*$, we now consider the following strictly convex quadratic mapping from $\mathbb{R}^m$ to $\mathbb{R}$:
\[
u \mapsto \langle \nabla v_\infty(x),\, f(x) + g(x)\,u\rangle + h(x) + u^\top R\, u
\]
It can be readily seen, using the first-order optimality condition, that this mapping admits its unique minimizer at \(u_\infty(x)\). Consequently, for any  control \(u\), we get the inequality  
\begin{align*}
 0=&   \langle \nabla v_\infty(x),\, f(x) + g(x)\,u_\infty(x)\rangle + h(x) + u_\infty(x)^\top R\, u_\infty(x) \\
\le&
\langle \nabla v_\infty(x),\, f(x) + g(x)\,u\rangle + h(x) + u^\top R\, u.
\end{align*}
 In particular, for \(u=u^*(x)\) we have
\[
0\le \langle \nabla v_\infty(x),\, f(x) + g(x)\,u^*(x)\rangle + h(x) + u^*(x)^\top R\, u^*(x).
\]
From this, together with the fact that the time derivative of \(v_\infty\) along an optimal trajectory \(\mathbf{x}^*(t;x)\) can be expressed as 
\begin{align*}
\deriv{t}v_\infty\bigl(\mathbf{x}^*(t;x)\bigr)
=&\langle \nabla v_\infty(\mathbf{x}^*(t;x)),\, \dot{\mathbf{x}}^*(t;x)\rangle \\
=&\langle \nabla v_\infty(\mathbf{x}^*(t;x)),\, f(\mathbf{x}^*(t;x)) + g(\mathbf{x}^*(t;x))\,u^*(\mathbf{x}^*(t;x))\rangle,
\end{align*}
we can infer that
\[
0\le \deriv{t}v_\infty\bigl(\mathbf{x}^*(t;x)\bigr) + h(\mathbf{x}^*(t;x)) + u^*(\mathbf{x}^*(t;x))^\top R\, u^*(\mathbf{x}^*(t;x)).
\]
Integrating over the interval \([0,T]\) yields
\[
0\le v_\infty\bigl(\mathbf{x}^*(T;x)\bigr)-v_\infty\bigl(\mathbf{x}^*(0;x)\bigr)
+\int_0^T \Bigl[h\bigl(\mathbf{x}^*(t;x)\bigr) + u^*(\mathbf{x}^*(t;x))^\top R\, u^*(\mathbf{x}^*(t;x))\Bigr]\mathrm d t.
\]
Since \(\mathbf{x}^*(t;x)\to 0\) as \(t\to\infty\) and \(v_\infty(0)=0\) (because \(v_s(0)=0\) for all \(s\in\mathbb{N}\)), letting \(T\to\infty\) yields
\[
v_\infty(x)\le \int_0^\infty \Bigl[h\bigl(\mathbf{x}^*(t;x)\bigr) + u^*(\mathbf{x}^*(t;x))^\top R\, u^*(\mathbf{x}^*(t;x))\Bigr]\mathrm d t = v^*(x).
\]
Finally, combining this with \eqref{eq:Beh:ineq}, we can  conclude that
\[
v_\infty(x)=v^*(x),\quad \text{for all } x\in\Omega^*.
\]
Furthermore, using  the inequality
\begin{align*}
   \|u_s - u^*\|_{C(K)} \leq \max_{x \in K} \left\Vert \frac{1}{2}R^{-1} g(x)^{\top} \right\Vert_2  \|v_s - v^*\|_{\xCone(K,\mathbb{R})},
\end{align*}
we obtain that  
\begin{align*}
    \lim_{s\to\infty} \|u_s - u^*\|_{C(K)} = 0.
\end{align*}
Thus, the proof is complete.
\end{proof}
\noindent  The difficulties associated with nested domains can be circumvented by assuming that \(\Omega\) is forward-invariant under the dynamics for  the suboptimal controllers \(u_s\) for all $s\in \mathbb{N}_0$ and the optimal controller \(u^*\). Under this assumption, one obtains
\[
\lim_{s\to\infty} \|v_s - v^*\|_{\xCone(K,\mathbb{R})} = 0,
\]
for every compact subset \(K \subset \Omega\). However, from a practical perspective, such a forward-invariant set cannot be predetermined since neither the suboptimal controller \(u_s\) nor the optimal controller \(u^*\) is available in advance. An alternative, commonly adopted in the literature, is to assume that \(\Omega = \mathbb{R}^N\), so that the GHJB equation is solved over the entire space \(\mathbb{R}^N\).
Also in this setting, a rigorous analysis of the PI can be carried out -- building directly on the preceding results -- without adopting the assumption, used for example in \cite{Jiang2017}, that  continuously differentiable VFs $v_s$  exists for every $s \in \mathbb{N}$.
In order to ensure that the closures of the sublevel sets of the VFs are compact in this scenario, we impose the following assumption:
\begin{assumption}\label{as:vfRadiallyUndbounded}
  The optimal VF \(v^*\) is radially unbounded, meaning 
  \[
    \lim_{\|x\|\to\infty} v^*(x) \;=\; \infty.
  \]
\end{assumption}
\noindent Under this assumption, \eqref{eq:MPInfinit} is structured such that the optimal cost increases without bound as the initial state moves farther from the target state at the origin.  Exploiting this property, the next theorem establishes the global convergence of Algorithm \ref{algo:PI}, whose GHJB equations are solved on the full state space $\mathbb{R}^N$ at every iteration.
\begin{theorem}\label{thm:exactPI2}
Suppose  Assumptions~\ref{as:data} and ~\ref{as:vfRadiallyUndbounded}  hold.  Let further $\bigl(u_0,\mathbb{R}^N\bigr)$ be an admissible controller--domain pair for \eqref{eq:MPInfinit} with
$
u_0 \in \xCinfty(\mathbb{R}^N,\mathbb{R}^M).
$
Then for the sequence of VFs $\{v_s\}_{s \in \mathbb{N}}$ produced by Algorithm \ref{algo:PI}, where the GHJB equation is solved on $\mathbb{R}^N$, the following statements are true:
\begin{enumerate}
    \item For every $s\in\mathbb{N}_0$ and for all $x\in\mathbb{R}^N$ it holds
    \[
    v^*(x) \le v_{s+1}(x) \le v_s(x).
    \]
    \item Suppose  for every compact subset $K\subset \mathbb{R}^N$ there exist positive constants   
    $C_{\nabla v,K}>0$ and $C_{H_v,K}>0$ such that  the uniform bounds \eqref{eq:uniformBoundGrad}
    are satisfied  for all $s\in\mathbb{N}_0$. If, in addition, an optimal controller $u^*$ exists on $\mathbb{R}^N$, then following  statements hold:
    \begin{itemize}
        \item[(i)] $ v^* \in \xCone(\mathbb{R}^N,\mathbb{R}^M), \quad    \lim_{s\to\infty} \|v_s - v^*\|_{\xCone(K,\mathbb{R})} = 0$;
        \item [(ii)] $ \lim_{s\to\infty} \|u_s - u^*\|_{\xCzero(K,\mathbb{R}^M)} = 0$.
    \end{itemize}
\end{enumerate}
\end{theorem}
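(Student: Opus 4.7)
The plan is to replay the strategy of Theorem~\ref{thm:exactPI} on the unbounded ambient space $\mathbb{R}^N$, using Assumption~\ref{as:vfRadiallyUndbounded} to recover the compactness of sublevel sets that was previously supplied by working inside a bounded $\Omega$. First I would verify inductively that each pair $(u_s,\mathbb{R}^N)$ is admissible in the sense of Definition~\ref{def:admissible}, so that Theorems~\ref{thm:GHJB} and~\ref{thm:GHJB2} apply at every step and deliver $v_s\in\xCinfty(\mathbb{R}^N,\mathbb{R})$ together with $u_{s+1}\in\xCinfty(\mathbb{R}^N,\mathbb{R}^M)$. The only piece of the admissibility checklist that does not follow verbatim from the bounded-domain analysis is global asymptotic stabilization by $u_{s+1}$: the Lyapunov identity \eqref{eq:Lyponov2} from the proof of Theorem~\ref{thm:GHJB2}~(3) still delivers strict decrease of $v_s$ along trajectories of the closed loop $f+gu_{s+1}$, but promoting this to decay for \emph{every} initial state in $\mathbb{R}^N$ requires radial unboundedness of $v_s$.

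I would obtain that radial unboundedness from the inequality $v_s(x)\ge v^*(x)$, which holds simply because $v_s$ is the cost associated with the admissible controller $u_s$ while $v^*$ is defined as an infimum. Combined with Assumption~\ref{as:vfRadiallyUndbounded}, this forces $v_s(x)\to\infty$ as $\|x\|\to\infty$, so every sublevel set of $v_s$ is compact, and a standard global Lyapunov argument (e.g.\ \cite[Theorem~4.2]{Khalil2002}) yields $\|\mathbf{x}_{u_{s+1}}(t;x)\|\to 0$ for every $x\in\mathbb{R}^N$. Forward invariance of $\mathbb{R}^N$ is automatic, and local exponential stabilization is preserved by the Hessian/Hurwitz argument at the end of the proof of Theorem~\ref{thm:GHJB2}~(3). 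Once $(u_{s+1},\mathbb{R}^N)$ is shown admissible, Part~(1) follows by integrating \eqref{eq:Lyponov2} from $0$ to $\infty$ along $\mathbf{x}_{u_{s+1}}(\cdot\,;x)$ and using $v_s(0)=v_{s+1}(0)=0$, exactly as in Theorem~\ref{thm:exactPI}~(1); the lower bound $v^*\le v_s$ is again immediate from the infimum definition.

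For Part~(2), I would reuse the Arzel\`a--Ascoli and Dini argument of Theorem~\ref{thm:exactPI}~(2) on every compact set $K\subset\mathbb{R}^N$. The uniform bounds on $\nabla v_s$ and $H_{v_s}$ yield equicontinuity of $\{v_s|_K\}$ and $\{\partial^\alpha v_s|_K\}_{|\alpha|=1}$, while the monotone pointwise bound from Part~(1) supplies uniform boundedness. Extracting a $\xCone(K,\mathbb{R})$-convergent subsequence, invoking Dini's theorem on the monotone continuous limit, and patching across an exhaustion of $\mathbb{R}^N$ by compact sets then produces a single $v_\infty\in\xCone(\mathbb{R}^N,\mathbb{R})$ with $v_s\to v_\infty$ in $\xCone$ on every compact. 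Passing to the limit inside the GHJB equation shows that $v_\infty$ satisfies the HJB equation pointwise on $\mathbb{R}^N$, and the identification $v_\infty=v^*$ is obtained by the two-sided comparison from Theorem~\ref{thm:exactPI}: $v_\infty\ge v^*$ is inherited from Part~(1), while the reverse inequality comes from integrating the HJB equation along an optimal trajectory $\mathbf{x}^*(\cdot\,;x)$, letting $T\to\infty$, and using admissibility of $(u^*,\mathbb{R}^N)$ to drive the boundary term $v_\infty(\mathbf{x}^*(T;x))$ to $v_\infty(0)=0$. The convergence $u_s\to u^*$ in $\xCzero(K,\mathbb{R}^M)$ is then read off from the explicit formula $u_s=-\tfrac12 R^{-1}g^\top\nabla v_{s-1}$.

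The main obstacle I anticipate is the global Lyapunov step, since it is the only place where the unbounded domain genuinely alters the analysis: radial unboundedness of each $v_s$ must be re-established at every iteration so that the induction chain (admissibility of $u_s$ $\Rightarrow$ existence and radial unboundedness of $v_s$ $\Rightarrow$ global decay under $u_{s+1}$ $\Rightarrow$ admissibility of $u_{s+1}$) does not break down. A secondary technical point is the consistency of the pointwise limit $v_\infty$ across different compact sets, which I would handle by uniqueness of pointwise limits on overlaps to produce a single globally defined $\xCone$ function on $\mathbb{R}^N$.
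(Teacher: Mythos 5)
Your proposal is correct and follows essentially the same route as the paper: the key ingredient in both is the observation that $v_s \ge v^*$ together with Assumption~\ref{as:vfRadiallyUndbounded} forces each $v_s$ to be radially unbounded (equivalently, to have bounded sublevel sets), which restores the compactness that the bounded-domain argument of Theorem~\ref{thm:exactPI} relied on, after which Parts~(1) and~(2) are replayed verbatim. The only cosmetic difference is that you invoke a global Lyapunov theorem directly, whereas the paper packages the same step as an auxiliary claim proved by applying Theorem~\ref{thm:GHJB2}~(3) to nested proper sublevel sets $\Omega_{c_2}\subset\Omega_{c_1}$ around each point; both are valid.
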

\begin{proof}
For the proof, we first require the following auxiliary result:
\begin{gather}\label{eq:auxStatmentR} \tag{Aux}
\text{If } (u, \mathbb{R}^N) \text{ is an admissible controller--domain pair, then also } (u^+, \mathbb{R}^N) \text{ is}.
\end{gather}
We retain the notation from Sections~\ref{sec:sec2} and~\ref{sec:sec3}, where \(u\) denotes the current controller and \(u^{+}\) the updated controller.
Our goal is to recast the setting in a form that permits the application of Part~(3) of Theorem~\ref{thm:GHJB2}.\\
First, we show that the sublevel sets of \(v_{u}\) are bounded.
Suppose, for the sake of contradiction, that a sublevel set of the form
\[
\Omega_c := \{\,y \in \mathbb{R}^{N} \mid v_{u}(y) < c \,\}
\]
is unbounded for a $c>0$.
Then there exists a sequence \(\{x_{n}\}_{n \in \mathbb{N}} \subset \Omega_c\) such that
\[
\|x_{n}\| \to \infty \quad \text{as } n \to \infty.
\]
By the radial unboundedness of the optimal value function \(v^{*}\) we also have
\[
v^{*}(x_{n}) \to \infty \quad \text{as } n \to \infty.
\]
On the other hand, for each \(x_{n} \in \Omega_c\) it follows from the definition of \(v_{u}\) that
\[
c \ge \int_{0}^{\infty} \!\bigl[h\bigl(\mathbf{x}_{u}(t;x_{n})\bigr)
      + u\bigl(\mathbf{x}_{u}(t;x_{n})\bigr)^{\!\top} R\,u\bigl(\mathbf{x}_{u}(t;x_{n})\bigr)\bigr] \,\mathrm{d}t
   \ge v^{*}(x_{n}).
\]
Hence \(v^{*}(x_{n}) \le c\) for all \(n\), contradicting the previous divergence.
Thus the closure of every sublevel set is compact.\\
Furthermore, since \(v_{u} \in \xCinfty\!\bigl(\mathbb{R}^{N},\mathbb{R}\bigr)\) by Part~(1) of Theorem~\ref{thm:GHJB}, continuity implies that \(\Omega_c\) is open and that
\[
\partial\Omega_c :=\{\,y \in \mathbb{R}^{N} \mid v_{u}(y) = c \,\}.
\]
Consequently, for every \(c>0\) the set \(\Omega_c\) is a proper sublevel set of \(v_{u}\).\\
Next, let \(x \in \mathbb{R}^{N}\) be arbitrary and choose constants \(\mu_{1},\mu_{2} \in (1,\infty)\) with \(\mu_{1}>\mu_{2}\).
Set \(c_{1}=\mu_{1}v_{u}(x)\) and \(c_{2}=\mu_{2}v_{u}(x)\).
By construction, \(x \in \Omega_{c_{1}}\) and \(x \in \Omega_{c_{2}}\).\\
Because \(\left(u,\mathbb{R}^{N}\right)\) is an admissible controller–domain pair, the pair \(\left(u,\Omega_{c_{1}}\right)\)is also admissible, since, by Lemma \eqref{lem:subLevel}, Part (3), \(\Omega_{c_{1}}\) is forward invariant under the dynamics \(f+g\,u\) and is an open subset of \(\mathbb{R}^{N}\) that contains the origin. In particular, the local exponential stabilization property transfers directly from \(\left(u,\mathbb{R}^N\right)\) to \(\left(u,\Omega_{c_{1}}\right)\) because  $\Omega_{c_{1}}$ contains a neighborhood of the origin.  \\
Furthermore, since \(\Omega_{c_{2}} \subset \Omega_{c_{1}}\) is a proper sublevel set of \(v_{u}\), Part~(3) of Theorem~\ref{thm:GHJB2} guarantees that \(\left(u^{+},\Omega_{c_{2}}\right)\) is an admissible controller–domain pair (here \(\Omega_{c_{1}}\) plays the role of \(\Omega\) in Theorem~\ref{thm:GHJB2}).
As \(x \in \mathbb{R}^{N}\) was arbitrary, the auxiliary statement~\eqref{eq:auxStatmentR} is established.

\medskip
\noindent \textbf{Part (1).}  We will now turn to the iterative PI scheme. By repeatedly applying the auxiliary statement  \eqref{eq:auxStatmentR}, it follows that \(\left(u_s,\mathbb{R}^N\right)\) is an admissible controller--domain pair for all $s \in \mathbb{N}$.
Using the same argument as in Part (1) of Theorem \ref{thm:exactPI}, but for an arbitrary $x\in\mathbb{R}^N$, we obtain the assertion stated in Part 1 of the current theorem.

\medskip
\noindent \textbf{Part (2).} Via Part (1) of the current theorem, we can conclude -- as in the proof of Part (2) of Theorem \ref{thm:exactPI} -- that there exists a candidate optimal VF \(v_{\infty} \in \xCone\!\left(\mathbb{R}^N,\mathbb{R}\right)\) satisfying
\[
\lim_{s\to\infty}\|v_s - v_\infty\|_{\xCone(K,\mathbb{R})} = 0,
\]
for every compact set \(K \subset \mathbb{R}^N\). Moreover, we have
\[
v_{\infty}(x) \geq v^*(x) \quad \text{for all } x \in \mathbb{R}^N,
\]
and \(v_{\infty}\) satisfies the HJB equation on the entire space \(\mathbb{R}^N\). By an argument analogous to that in Part (2) of Theorem \ref{thm:exactPI},  we deduce that 
\begin{align*}
   v_{\infty}(x) \leq \int_0^\infty \Bigl[h\bigl(\mathbf{x}_{u^*}(t;x)\bigr) + u^*(\mathbf{x}_{u^*}(t;x))^\top R\, u^*(\mathbf{x}_{u^*}(t;x))\Bigr]\mathrm d t = v^*(x), 
\end{align*}
which completes the proof.
\end{proof}
\noindent 
%In contrast to Theorem \ref{thm:exactPI}, in the setting of the preceding theorem, one does not need to assume the existence of an optimal controller; rather, its existence is an inherent outcome of the theorem. Consequently, the optimal controller is given by \eqref{eq:feedback}.\\
Note that Theorem \ref{thm:exactPI} is particularly relevant for the LQR case, where the domain is taken to be \( \mathbb{R}^N\). In this setting, the VFs take the quadratic form 
\[
v_s(x) = \left\langle   x, K_s x \right \rangle.
\]
{Further}, the GHJB equation reduces to a Lyapunov equation that is uniquely solved by a positive-definite matrix \(K_s \in \mathbb{R}^{N \times N}\) (see, e.g., {\cite[Theorem 4.6]{Khalil2002}}). Thus, the VFs must necessarily be quadratic, given the uniqueness of the solution (see Theorem \ref{thm:GHJB} Part (2)). Moreover, due to the monotonic decrease of the VFs along the PI iterations, we have
\[
\left\langle    x, K_{s} x   \right \rangle  \leq  \left\langle    x ,  K_{s-1} x   \right \rangle  \leq \lambda_{\max}(K_{s-1}) \|x\|^2,
\]
which implies that
\[
\Vert K_s\Vert_2 \leq \lambda_{\max}(K_{0})
\]
for all \(s \in \mathbb{N}_0\). This uniform bound on the spectral norm $\Vert \cdot \Vert_2$ of the  gain matrices ensures that condition \eqref{eq:uniformBoundGrad} is satisfied. Consequently, by applying Theorem \ref{thm:exactPI2}, we deduce the convergence of the Newton--Kleinman algorithm, provided that an initial admissible controller--domain pair \((u_0, \mathbb{R}^N)\) is given with  initial controller of the form
\[
u_0(x) := - \frac{1}{2}R^{-1} B^{\top} K_{-1} x
\]
for a matrix $K_{-1} \in \mathbb{R}^{N \times N}$.\\
In the general nonlinear setting under consideration, it is unrealistic  to expect that the GHJB equation can be solved globally or that an initial stabilizing controller exists on the entirety of \(\mathbb{R}^N\), as required by the aforementioned theorem. From this perspective, the construction provided in Definition \ref{def:setContruct} is more practical for achieving convergence via Theorem \ref{thm:exactPI}, as the domains are determined iteratively based on the available VF \(v_s\) at each iteration. However, it is possible that the limiting domain \(\Omega_\infty\) contracts to the singleton \(\{0\}\). This outcome depends on the relative rates at which the domain contracts and the sequence \(\{v_s\}\) converges to \(v^*\). 
In our analysis, the contraction of the domain and the convergence of the VFs -- specifically, the monotonic decrease of the VFs -- occur simultaneously. This allows for the possibility that the limiting domain \(\Omega_\infty\) is not reduced to the singleton $\{0\}$.

\noindent Nevertheless, our results guarantee that the optimal VF \(v^*\) is continuously differentiable on any forward-invariant set \(\Omega^* \subset \Omega_\infty\), provided that conditions \(\eqref{eq:uniformBoundGrad}\) hold; these conditions reflect a well-behaved PI iteration.
\noindent  At the end of this section, we explain why, in many numerical experiments, convergence is observed without the need of explicitly shrinking the domain under consideration.
Specifically, in numerous model problems, particularly those serving as academic benchmarks, the VFs generated by the PI iteration, as well as the optimal VF, may exhibit a high degree of rigidity, in particular, they are real analytic. By the identity theorem for analytic functions, even if the GHJB equations are only solved on a strict subset \(\Omega\) properly, the resulting solution will necessarily coincide with the underlying VF on the entire domain \(\Omega\). 

\section{Numerical experiments}
\label{sec:numerical}
\noindent  For the numerical experiments in this section, we are following the approach proposed in \cite{Alla2015}, where the  \eqref{eq:HBJ2} is solved by means of a grid-based implementation of PI.  Specifically, the spatial domain $\Omega$ is discretized on a grid
$
G := \{ x_1, \ldots, x_n \} \subset \Omega \setminus \{0\},
$
 making this approach applicable only to low-dimensional model problems.
For a fixed controller $u_s$, the policy-evaluation step relies on a
temporal discretization of
\begin{align*}
  v_s(x)
  = \int_{0}^{\Delta t}
      \Bigl[
        h\bigl(\mathbf{x}_{u_s}(t;x)\bigr)
        +\bigl\langle
           u_s\bigl(\mathbf{x}_{u_s}(t;x)\bigr),
           R\,u_s\bigl(\mathbf{x}_{u_s}(t;x)\bigr)
         \bigr\rangle
      \Bigr]\mathrm d t
      + v_s\bigl(\mathbf{x}_{u_s}(\Delta t;x)\bigr),
\end{align*}
using an explicit Euler step for the dynamics and a left-endpoint
quadrature rule for the integral.  
This leads to the problem of finding a function
$\tilde v_s\in \xCzero(\mathbb{R}^N,\mathbb{R})$ that satisfies
\begin{align*}
  \tilde v_s(x)
  &= \Delta t\Bigl[
       h(x)
       +\bigl\langle u_s(x),R\,u_s(x)\bigr\rangle
     \Bigr]
     + \tilde v_s\bigl(
         x + \Delta t\bigl(f(x)+g(x)u_s(x)\bigr)
       \bigr),
     &&\forall x\in\Omega,
\\
  \tilde v_s(0) &= 0.
\end{align*}
Using the grid $G$, we approximate the solution with an
interpolation ansatz.  
More precisely, we seek an interpolant 
$I_{\tilde v_s}\in \xCzero(\mathbb{R}^N,\mathbb{R})$ that fulfills
\begin{align}
  I_{\tilde v_s}(x_i)
  &= \Delta t\Bigl[
       h(x_i)
       +\bigl\langle u_s(x_i),R\,u_s(x_i)\bigr\rangle
     \Bigr]
     + I_{\tilde v_s}\bigl(
         x_i + \Delta t\bigl(f(x_i)+g(x_i)u_s(x_i)\bigr)
       \bigr),
     &&i=1,\dots,n,
  \label{eq:PITabular1}
\\
  I_{\tilde v_s}(0) &= 0.
  \label{eq:PITabular2}
\end{align}
Instead of the orthogonal polynomials used in~\cite{Alla2015},
we employ kernel methods (see \cite{Wendland2004} for a comprehensive introduction).
Here, the surrogate takes the form
\[
  I_{\tilde v_s}(x)=\sum_{i=1}^{n}\alpha_i\,k(x_i,x),
\]
where $k\colon\mathbb{R}^N\times\mathbb{R}^N\to\mathbb{R}$ is a symmetric,
positive-definite kernel.  
Choosing a kernel with $k(0,x)=0$ for every $x\in\mathbb{R}^N$ guarantees that
the boundary condition~\eqref{eq:PITabular2} is automatically satisfied.
Moreover, to solve \eqref{eq:PITabular1}, only a matrix inversion is required. To be more precise, the coefficient vector
$\boldsymbol\alpha=[\alpha_{1},\dots,\alpha_{n}]^{\mathsf T}$ is
obtained from 
\[
  \boldsymbol\alpha
  =\bigl(K_G-K_G^{+}\bigr)^{-1}
   \,\Delta t\,
   \begin{bmatrix}
     h(x_1)+\langle u_s(x_1),R\,u_s(x_1)\rangle\\
     \vdots\\
     h(x_n)+\langle u_s(x_n),R\,u_s(x_n)\rangle
   \end{bmatrix},
\]
where the kernel matrices are defined component-wise by
\[
  (K_G)_{ij} = k(x_i,x_j),
  \qquad
  (K_G^{+})_{ij} = k\!\bigl(x_i,
      x_j+\Delta t\bigl(f(x_j)+g(x_j)u_s(x_j)\bigr)\bigr).
\]
Here $K_G$ is the usual Gram matrix and $K_G^{+}$ captures the Euler
shift. To obtain the updated feedback policy, we compute
\begin{align}
  u_{s+1}(x)
  = -\tfrac12\,R^{-1}g(x)^{\top}\nabla I_{\tilde v_s}(x),
  \label{eq:PITabular3}
\end{align}
where
\[
  \nabla I_{\tilde v_s}(x)
  = \sum_{i=1}^{n}\alpha_i\,\nabla_{2}k(x_i,x),
\]
and $\nabla_{2}$ denotes differentiation with respect to the second
argument of $k$. The grid-based PI algorithm therefore amounts to iteratively   solving~\eqref{eq:PITabular1} for the kernel interpolant
   $I_{\tilde v_s}$, and   updating the feedback values $u_{s+1}$ via~\eqref{eq:PITabular3}, 
until the maximum change of the value-function surrogates
\[
  \max_{i=1,\dots,n}
  \bigl|I_{\tilde v_s}(x_i)-I_{\tilde v_{s-1}}(x_i)\bigr|
\]
falls below a prescribed tolerance.\\
\noindent The model problem analyzed is the Van der Pol oscillator -- a two-dimensional system with a single control input. This problem is a specific instance of \eqref{eq:MPInfinit}, where we set
\[
f(x,y) = \begin{bmatrix} y \\ -x + y(1-x^2) \end{bmatrix}, \quad
g(x,y)= \begin{bmatrix} 0 \\ 1 \end{bmatrix}, \quad h(x,y) = x^2 + y^2,\quad R = \frac{1}{50}.
\]
The initial controller is defined as
\[
u_0(x,y) = -\frac{1}{R}\, g(x,y)^{\top} K_R \begin{bmatrix} x \\ y \end{bmatrix},
\]
where \(K_R \in \mathbb{R}^{2 \times 2 }\) is the solution to the Riccati equation for the linearized version of the problem. This LQR approach involves replacing \(f\) with its linearized form around the origin:
\[
\tilde{f}(x,y)= \begin{bmatrix} 0 & 1 \\ -1 & 1 \end{bmatrix} \begin{bmatrix} x \\ y \end{bmatrix}.
\]
Figure~\ref{fig:vecField} shows the trajectories with initial states chosen on a grid in two different computational domains: 
\(\Omega_{\textbf{A}} := [-1,1] \times [-0.25,0.25]\) (left) and \(\Omega_{\textbf{B}} := [-0.25,0.25] \times [-1,1]\) (right), using the initial controller \(u_0\).
\begin{figure}[htbp]
    \centering
    \begin{minipage}{0.48\textwidth}
         \centering
         \includegraphics[width=\textwidth]{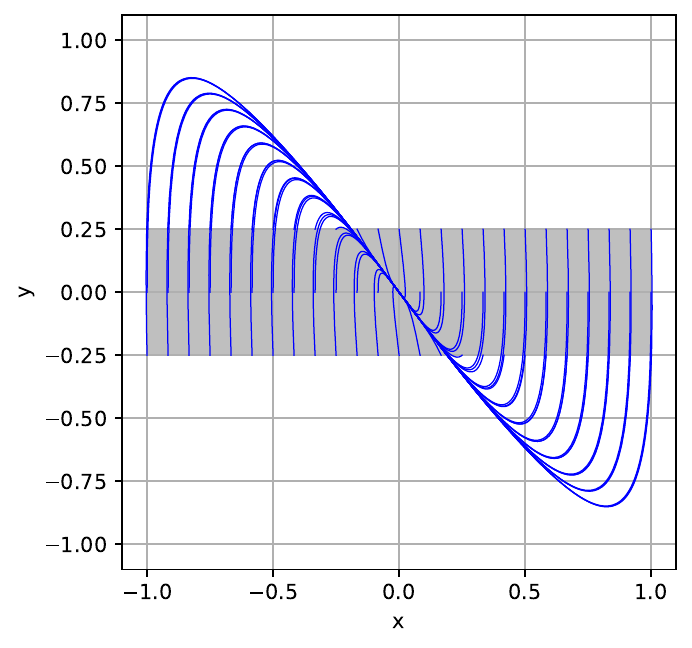}
    \end{minipage}\hfill
    \begin{minipage}{0.48\textwidth}
         \centering
         \includegraphics[width=\textwidth]{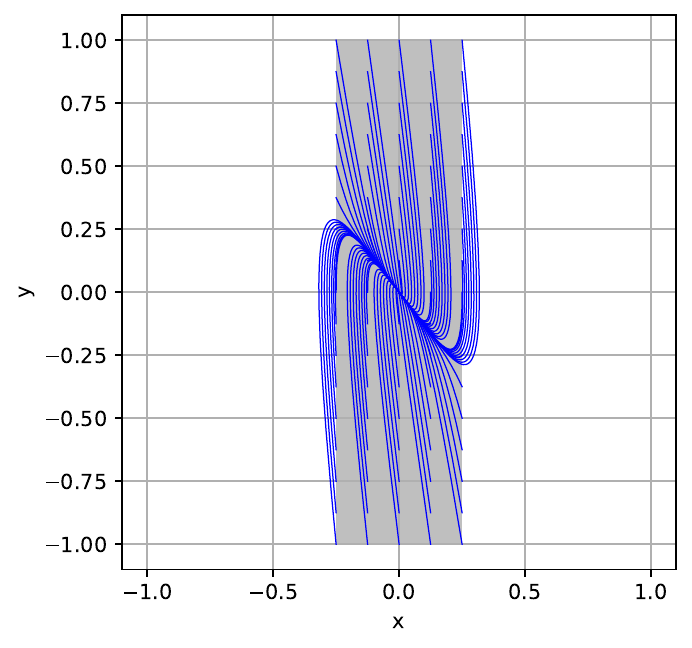}
    \end{minipage}
    \caption{Trajectories of the Van der Pol oscillator model problem using the initial controller \(u_0\) with initial states on the grid on $\Omega_{\textbf{A}}$ (left) and $\Omega_{\textbf{B}}$ (right).}
    \label{fig:vecField}
\end{figure}
It is evident that neither $\Omega_{\textbf{A}}$ nor $\Omega_{\textbf{B}}$ is forward-invariant under the initial controller. However, trajectories originating in $\Omega_{\textbf{B}}$ remain in closer proximity to $\Omega_{\textbf{B}}$ compared to those starting in $\Omega_{\textbf{A}}$. To quantify this difference, we benchmark the grid-based PI algorithm in three distinct scenarios:
\begin{enumerate}
  \item[\textbf{(A)}] fixed computational domain $\Omega_{\textbf{A}}$;
  \item[\textbf{(B)}] fixed computational domain $\Omega_{\textbf{B}}$;
  \item[\textbf{(C)}] an initial computational domain $[-1,1]^2$, which is progressively contracted during the gird-based PI iterations to the set  $\Omega_s$ defined in~\eqref{def:c03} (cf.\ Algorithm~\ref{algo:PIBounded}).
\end{enumerate}
For comparison, the error $\text{E}_{\ell^2}$ is defined as the relative \(\ell^2\)-deviation between the exact optimal VF \(v^*\) and the kernel surrogate solution \(I^n_{v^*}\) obtained after convergence of the PI algorithm:
\[
\mathrm{E}_{\ell^2} := \sqrt{\frac{\sum_{i=1}^{n_{\text{\tiny{test}}}} \left| v^*\bigl(x_{\text{\tiny{test}}}^{(i)}\bigr)-I^n_{v^*}\bigl(x_{\text{\tiny{test}}}^{(i)}\bigr) \right|^2}{\sum_{i=1}^{n_{\text{\tiny{test}}}} \left| v^*\bigl(x_{\text{\tiny{test}}}^{(i)}\bigr) \right|^2}}.
\]
To assess the optimal VF $v^*$ on the test set 
\(\{x_{\text{\tiny{test}}}^{(1)},\ldots,x_{\text{\tiny{test}}}^{(n_{\text{\tiny{test}}})}\}\),
we solve a finite-horizon version of \eqref{eq:MPInfinit} for each initial state $x_{\text{\tiny{test}}}^{(i)}$ $\left(i=1,...,n_{\text{\tiny{test}}}\right)$, via Pontryagin’s maximum principle, using a sufficiently long horizon and a terminal cost $\langle x, K_R x \rangle$; the latter approximates the optimal VF in a neighbourhood of the equilibrium. The resulting open-loop trajectories provide approximate solutions to the infinite-horizon problem \eqref{eq:MPInfinit} and, consequently, approximate values
$v^*\left(x_{\text{\tiny{test}}}^{(i)}\right)$. However, this approximation also introduces an error in the computation of $\mathrm{E}_{\ell^2}$, making it infeasible to reach machine precision.\\
For experiment~\textbf{(A)}, the grid-based PI algorithm is performed on the rectangular grid
\[
  \mathcal{G}^{\mathrm{tr}}_{A} \;=\; 
  G_{1,\,0.25,\,30}\;\subset\;\Omega_{\textbf{A}},
  \qquad 
  G_{a,\,b,\,n} \;:=\;
  \Bigl\{\,\bigl(-a+2a\tfrac{i-1}{n-1},\,-b+2b\tfrac{j-1}{n-1}\bigr)
  \,\Bigm|\, i,j \in \{1,\dots,n\}\Bigr\}.
\]
and the $\mathrm{E}_{\ell^2}$-error is evaluated on the coarser test grid
$\mathcal{G}^{\mathrm{te}}_{A}=G_{1,\,0.25,\,10}\subset\Omega_{\textbf{A}}$.
Experiment~\textbf{(B)} is the axis–flipped analogue:
\[
  \mathcal{G}^{\mathrm{tr}}_{B}=G_{0.25,\,1,\,30}\subset\Omega_{\textbf{B}},
  \qquad
  \mathcal{G}^{\mathrm{te}}_{B}=G_{0.25,\,1,\,10}\subset\Omega_{\textbf{B}}.
\]
Scenario~\textbf{(C)} begins with the full square $[-1,1]^2$:
\[
  \mathcal{G}^{\mathrm{tr}}_{C}=G_{1,\,1,\,30}\subset[-1,1]^2,
  \qquad
  \mathcal{G}^{\mathrm{te}}_{C}=G_{1,\,1,\,10}\subset[-1,1]^2,
\]
and then performs the set construction as in Definition \ref{def:setContruct}.
Figure~\ref{Fig:PI} plots the evolution of the $\mathrm{E}_{\ell^2}$-error,  against the number of PI {iterations} for
all three scenarios.
\begin{figure}[htbp] 
    \centering
         \centering
         \includegraphics[width=\textwidth]{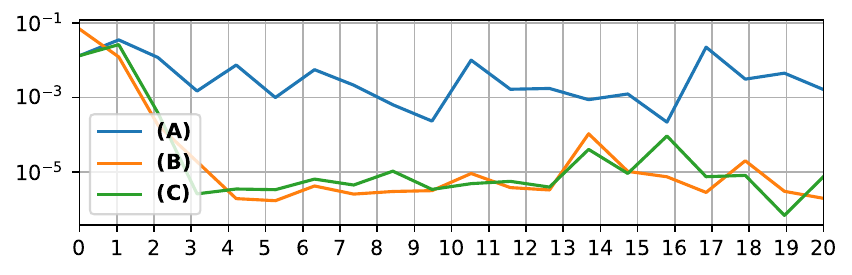}
    \caption{The $\mathrm{E}_{\ell^2}$-error over 20 grid-based PI iterations   for the three scenarios \textbf{(A)}--\textbf{(C)}.}\label{Fig:PI}
\end{figure}
The grid-based PI algorithm in scenario \textbf{(B)} -- the nearly forward-invariant set -- and scenario \textbf{(C)} yields errors two orders of magnitude smaller than in scenario \textbf{(A)}, where the computational domain is far from being forward-invariant. These experiments suggest that forward invariance is crucial for the numerical performance of the PI method. Nevertheless, the errors in scenario \textbf{(B)} are comparable to those in scenario \textbf{(C)}, indicating that explicitly constructing forward-invariant domains as in Algorithem \ref{algo:PIBounded} may not be strictly necessary for convergence.\\
To further illustrate this point, we perform a generalization experiment in which the solution is tested on a grid covering the entire unit square, i.e.,  $G_{1,1,10}$.
For the solution from scenario \textbf{(A)}, the $\text{E}_{\ell^2}$-error obtained after twenty grid-based PI iterations is \(1.83 \times 10^{-3}\)   and for the solution from scenario \textbf{(B)} it is \(1.85 \times 10^{-3}\). This demonstrates that both solutions generalize well on a domain significantly larger than the one used in the PI computations, suggesting that the VFs \(v_s\)  for this model problem possess high regularity -- possibly even analyticity -- on \([-1,1]^2\). If analyticity holds, a strong extension property would follow: although PI is executed properly on a comparatively small forward-invariant subset where convergence is guaranteed, the computed VF after convergence coincides with the optimal VF in a neighbourhood of the origin. By the identity theorem for analytic functions, this local agreement forces the two functions to coincide on the whole square \([-1,1]^2\).\\
A prerequisite for the identity theorem -- namely, that the convergence domain $\Omega_{\infty}$ contains a non-trivial open neighbourhood of the origin -- is examined in a subsequent experiment (Fig. \ref{fig:sublevel}). In that experiment, the sets $\Omega_s$ defined in \eqref{def:c03} and also used in scenario \textbf{(C)} are visualized over ten PI iterations. These sets converge rapidly to a non-trivial, oval-shaped domain, implying that $\Omega_{\infty}$ is strictly larger than the singleton $\{0\}$.

\begin{figure}[htbp]
    \centering
         \centering
         \includegraphics[width=0.5\textwidth]{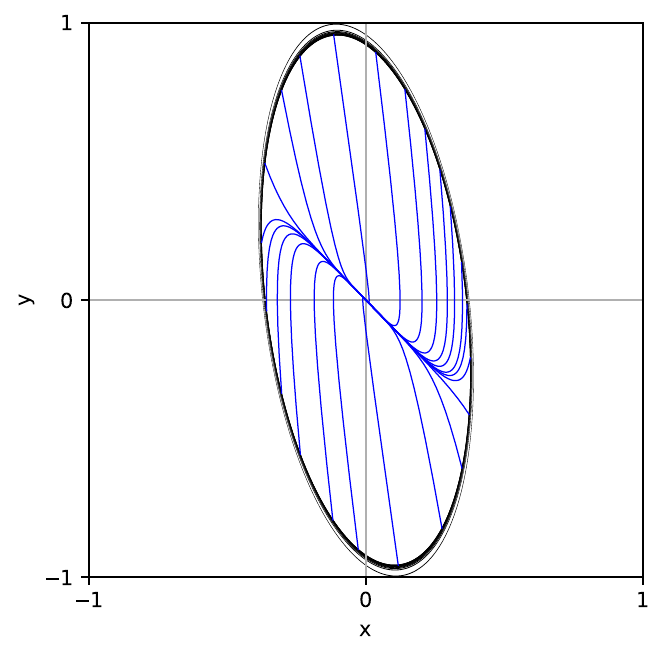}
         % Optional: \caption*{(b)}
    \caption{The nested sets $\Omega_s$ for the Van der Pol oscillator model problem appearing in Algorithm \ref{algo:PIBounded}  for 10 iterations. In the innermost set, obtained from the VF after 10 iterations, the closed‑loop trajectories generated by the corresponding controller -- starting from initial states on the boundary -- are visualized. }
    \label{fig:sublevel}
\end{figure}
\section{Conclusion and outlook}

\noindent The main contribution of the current work is to rigorously address several key issues arising in the PI algorithm for generating an optimal feedback policy in infinite-horizon nonlinear optimal control. These issues include ensuring the differentiability of the solution to the GHJB equation -- which is necessary to guarantee the well-posedness of the PI scheme -- as well as providing a precise definition of the domain over which the GHJB equation can be solved. 
 A potentially undesirable issue arising from our framework is that the iteratively constructed domains may exhibit progressive contraction, possibly degenerating to the trivial singleton $\{ 0 \}$. Consequently, it is important to rigorously analyze the conditions that guarantee that this contraction does not occur and to identify sufficient criteria that prevent such degeneration. These efforts represent a significant direction for future research.\\
However, if the problem is such that this contraction does not occur, convergence is guaranteed on all forward-invariant sets contained within the domains where the GHJB equation is solved,  provided that  \(\eqref{eq:uniformBoundGrad}\) holds. This condition ensures that the PI iterates do not become excessively steep. Moreover, under these assumptions, we can conclude that the optimal VF is continuously differentiable.\\
Although alternative conditions implying
\eqref{eq:uniformBoundGrad} may be conceivable, we suppose that the continuous differentiability of the optimal VF will then be an essential requirement.
In this sense, the statements of Theorem \ref{thm:exactPI} and Theorem \ref{thm:exactPI2} are particularly valuable,  since we can conclude that the optimal VF is continuously differentiable. \\
Furthermore, the numerical findings suggest that the assumptions regarding the domains in which the PI algorithm is performed may be overly restrictive for certain model problems. In the case of the Van der Pol oscillator, for example, the excellent generalization capability of the computed VFs   indicates that they could be analytic on $[-1,1]^2$. It would be of considerable interest in future work to investigate the conditions on the initial controller $u_0$ and the system data \(f\), \(g\), \(h\), and \(R\) that ensure the analyticity of the VFs \(v_s\). Such results would not only explain the observed numerical behavior, but also clarify why explicit domain adjustments appear to be non-critical in practice when implementing PI methods and are therefore often omitted.
Nevertheless, our numerical experiments also provide clear evidence that the choice of computational domain is a significant factor. More precisely, the PI algorithm performs markedly better when the domain is close to being forward-invariant.

\section*{Acknowledgements}
\noindent The authors appreciate and acknowledge Matthias Baur for his helpful comments and insights on this manuscript. Funded by Deutsche Forschungsgemeinschaft (DFG, German Research Foundation) under Project No. 540080351 and Germany’s Excellence Strategy -- EXC 2075 -- 390740016. We acknowledge support by the Stuttgart Center for Simulation Science (SimTech).

%%-----------------------------
   % or any other *.bst* the journal requires
\bibliographystyle{abbrv}   % or: abbrv, amsplain …

\appendix
\section{Proofs}
\subsection{Proof of Lemma \ref{lemma:LemmahomogenSol}}\label{sec:LemmahomogenSolProof}
   \noindent  Since {$v \in \xCone(\Omega,\mathbb{R})$} and $\mathbf{x}_u(t;x)$ is the solution of
    \[
        \dot{\mathbf{x}}_u(t;x) = f(\mathbf{x}_u(t;x)) + g(\mathbf{x}_u(t;x))\,u(\mathbf{x}_u(t;x))
    \]
    with $\mathbf{x}_u(0;x)=x \in \Omega$, the chain rule yields
    \begin{align*} 
      \deriv{t}  v(\mathbf{x}_u(t;x)) &= \langle \nabla v(\mathbf{x}_u(t;x)), \dot{\mathbf{x}}_u(t;x) \rangle \\
        &= \langle \nabla v(\mathbf{x}_u(t;x)), f(\mathbf{x}_u(t;x)) + g(\mathbf{x}_u(t;x))\,u(\mathbf{x}_u(t;x)) \rangle.
    \end{align*}
    By the condition \eqref{eq:invariance}, it follows that
    \[
        \deriv{t} v(\mathbf{x}_u(t;x)) = 0 \quad \text{for all } t \ge 0.
    \]
    Consequently, $v(\mathbf{x}_u(t;x))$ remains constant along trajectories, so 
    \[
        v(\mathbf{x}_u(t;x)) = v(x) \quad \text{for all } t \ge 0.
    \]   
    Since  \((u,\Omega)\) is an admissible controller--domain pair, we have that
  \[
    \lim_{t\to\infty} \Vert \mathbf{x}_u(t;x)\Vert \;=\; 0.
\]
  Thus, { using the continuity of $v$, we can write }
    \[
        v({x}) = \lim_{t \to \infty} v(\mathbf{x}_u(t;x)) = v\left( \lim_{t \to \infty} \mathbf{x}_u(t;x) \right) = v(0).
    \]
    Since  $v(0)=0$ by \eqref{eq:bc}, it follows that
    \[
        v(x) = 0 \quad \text{for every } x \in \Omega,
    \]
 and this concludes the proof. \qed

\subsection{Proof of Lemma \ref{lem:exp_decay_higher_derivatives}}
\label{sec:AppAuxLemma}

\noindent Before proving the lemma, we establish the following auxiliary results, which will be used in the proof.

\begin{lemma}\label{lem:globallyExp}
  Let $A \in \mathbb{R}^{N\times N}$ be a Hurwitz matrix, and suppose 
  $g \in \xCzero \!\left([0,\infty)\times\mathbb{R}^N,\mathbb{R}^N\right)$ is a function that satisfies  
  \begin{equation}\label{eq:g_decay}
      \lVert g(t,y) \rVert \le C_1 e^{-\mu_1 (t-t_1)} \lVert y \rVert
      \qquad \text{for all } t \ge t_1,\text{ and } y\in\mathbb{R}^N,
  \end{equation}
for some constants $C_1>0$, $\mu_1>0$, and an initial time $t_1>0$.  Then there exist constants $C_2>0$, $\mu_2>0$, and $t_2>t_1$ such that the solution of the nonlinear system
  \begin{equation}\label{eq:ode0}
    \dot{\mathbf{x}}(t;x)=A\mathbf{x}(t;x)+g\bigl(t,\mathbf{x}(t;x)\bigr),
    \qquad \mathbf{x}(0)=x\in\mathbb{R}^N,
  \end{equation}
  is globally exponentially stable from time $t_2$ onward; that is,
  \begin{equation}\label{eq:exp_decay}
      \lVert \mathbf{x}(t;x) \rVert \le C_2 e^{-\mu_2 (t-\tilde{t})}
      \lVert \mathbf{x}(\tilde{t};x) \rVert
      \qquad \text{for all } t \ge \tilde{t} \ge t_2.
  \end{equation}
\end{lemma}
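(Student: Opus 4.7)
The plan is to reduce the nonlinear ODE to an integral equation via the variation-of-constants formula and then extract exponential decay from a Grönwall estimate. Because $A$ is Hurwitz, a standard spectral argument yields constants $M\ge 1$ and $\alpha>0$ with $\lVert e^{At}\rVert\le M e^{-\alpha t}$ for every $t\ge 0$. This is the exponential contraction of the unperturbed linear flow that the argument will preserve.

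For any $\tilde t\ge t_1$ and $t\ge\tilde t$, the solution admits the representation
\[
\mathbf{x}(t;x)=e^{A(t-\tilde t)}\mathbf{x}(\tilde t;x)+\int_{\tilde t}^{t}e^{A(t-s)}g\bigl(s,\mathbf{x}(s;x)\bigr)\,\mathrm d s .
\]
Taking norms, inserting the decay bound on $g$, and multiplying through by $e^{\alpha(t-\tilde t)}$ yields
\[
\phi(t)\le M\,\lVert\mathbf{x}(\tilde t;x)\rVert+\int_{\tilde t}^{t}M C_1 e^{-\mu_1(s-t_1)}\phi(s)\,\mathrm d s,\qquad \phi(t):=e^{\alpha(t-\tilde t)}\lVert\mathbf{x}(t;x)\rVert .
\]
Grönwall's inequality then gives $\phi(t)\le M\,\lVert\mathbf{x}(\tilde t;x)\rVert\exp\!\bigl(\tfrac{MC_1}{\mu_1}e^{-\mu_1(\tilde t-t_1)}\bigr)$, where the estimate $\int_{\tilde t}^{t}e^{-\mu_1(s-t_1)}\,\mathrm d s\le \tfrac{1}{\mu_1}e^{-\mu_1(\tilde t-t_1)}$ is used. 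Choosing $t_2>t_1$ so large that $\tfrac{MC_1}{\mu_1}e^{-\mu_1(t_2-t_1)}\le 1$ and setting $C_2:=Me$, $\mu_2:=\alpha$, one recovers the claimed bound after undoing the substitution.

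The main obstacle is to ensure that the constants $C_2$ and $\mu_2$ are independent of both $\tilde t\ge t_2$ and $x$. This is handled by the monotone decay in $\tilde t$ of the Grönwall exponent $\tfrac{MC_1}{\mu_1}e^{-\mu_1(\tilde t-t_1)}$: once $\tilde t$ crosses the single threshold $t_2$, the universal bound $Me$ applies for all later $\tilde t$. No further difficulty arises, since the entire estimate is linear in the reference value $\mathbf{x}(\tilde t;x)$; global existence of the trajectory on $[\tilde t,\infty)$ also follows from the same Grönwall estimate, which precludes finite-time blow-up.
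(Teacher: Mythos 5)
Your proof is correct, and it takes a genuinely different route from the paper's. The paper reduces the claim to a cited perturbation result: it observes that for $t$ beyond some $t_\gamma$ the perturbation satisfies $\lVert g(t,y)\rVert\le\gamma\lVert y\rVert$ with $\gamma$ arbitrarily small, and then invokes the Lyapunov-function argument of \cite[Lemma 9.1 and Example 9.1]{Khalil2002} for Hurwitz systems under small linearly bounded perturbations. You instead work directly from the variation-of-constants representation and a Gr\"onwall estimate for $\phi(t)=e^{\alpha(t-\tilde t)}\lVert\mathbf{x}(t;x)\rVert$, which is self-contained and yields explicit constants: the threshold $t_2$ is determined by $\tfrac{MC_1}{\mu_1}e^{-\mu_1(t_2-t_1)}\le 1$, and one may take $C_2=Me$ and $\mu_2=\alpha$, so the full decay rate of the unperturbed linear flow is retained (the Lyapunov route typically degrades the rate). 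A further difference is that your argument only uses that the tail integral $\int_{\tilde t}^{\infty}C_1e^{-\mu_1(s-t_1)}\,\mathrm ds$ is small, not pointwise smallness of the perturbation coefficient, which is slightly more robust. Two minor points you handle correctly but that are worth keeping explicit: the variation-of-constants identity holds for any solution of the ODE (no uniqueness is needed), and the uniformity of $C_2,\mu_2$ in $\tilde t\ge t_2$ and in $x$ follows because the Gr\"onwall exponent is monotone decreasing in $\tilde t$ and the whole estimate is linear in $\lVert\mathbf{x}(\tilde t;x)\rVert$.
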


\begin{proof}
 From \eqref{eq:g_decay} we know that, for every $\gamma>0$, there exists a
  time $t_\gamma>0$ such that
  \begin{equation}\label{eq:g_small}
      \lVert g(t,y)\rVert \le \gamma \lVert y\rVert,
      \qquad \text{for all } t\ge t_\gamma, \text{ and } y\in\mathbb{R}^N
  \end{equation}
Choosing a sufficiently small $\gamma$ and setting $t_2:=t_\gamma$, the system 
\eqref{eq:ode0}, for every $t\ge t_2$, can be viewed as a Hurwitz linear
  system $\dot{\mathbf x}=A\mathbf x$ perturbed by a bounded nonlinear term whose  norm is at most~$\gamma \lVert y\rVert$. Then, applying arguments similar to those in \cite[Lemma 9.1 and Example 9.1]{Khalil2002}, which address the exponential stability of Hurwitz matrices under small nonlinear perturbations, we conclude that \eqref{eq:exp_decay} holds. This completes the proof.
\end{proof}  
\noindent We are now in a position to prove Lemma \ref{lem:exp_decay_higher_derivatives}.

\begin{proof}[Proof Lemma \ref{lem:exp_decay_higher_derivatives}]
 \noindent\textbf{Part (1).} 
The smoothness statement is a standard result from ODE theory (see, for instance, \cite[Corollary~4.1]{Hartman2002}).

\medskip

\noindent \textbf{Part (2).}  
\noindent The flow map satisfies
\[
\partial_t \mathbf{x}(t;y)= l\bigl(\mathbf{x}(t;y)\bigr) \text{ with } \mathbf{x}(0;y)= y.
\]
For any multi-index $\alpha\in\mathbb{N}_0^N$, we differentiate the above equation with respect to the initial state variable $y$. By interchanging the order of differentiation (which is justified by the smoothness $\mathbf{x}(\, \cdot \, ; \, \cdot \, )$ proved in Part~(1)), we obtain for $\partial^\alpha_y \mathbf{x}(t;y)$ an ODE 
\begin{equation}\label{eq:inLemmaExpo2}
  \partial_t \Bigl(\partial^\alpha_y \mathbf{x}(t;y)\Bigr) = D l\bigl(\mathbf{x}(t;y)\bigr) \, \partial^\alpha_y \mathbf{x}(t;y) + R_\alpha(t;y),
\end{equation}
where the initial condition is given by
\[
\partial^\alpha_y \mathbf{x}(0;y)=
\begin{cases}
e_{\alpha}, & \text{if } |\alpha|=1, \\
0, & \text{if } |\alpha|\geq 2,
\end{cases}
\]
with $e_{\alpha}$ denoting the standard unit vector corresponding to the multi-index $\alpha$ when $|\alpha|=1$. Here, $D l\bigl(\mathbf{x}(t;y)\bigr)$ is the Jacobian matrix of $l$ evaluated at $\mathbf{x}(t;y)$, and the term $R_\alpha(t;y)$ arises from the chain rule. More precisely, $R_\alpha(t;y)$ depends solely on derivatives of $\mathbf{x}(\, \cdot \, ; \, \cdot \, )$ of order strictly less than $|\alpha|$. In particular, one finds that for $|\alpha|=1$
\[
R_\alpha(t;y) \equiv 0,
\]
and for $|\alpha|>1$, the the multivariate Faà di Bruno formula (see \cite{Constantine1996}), yields
\[
R_\alpha(t;y)= \sum_{m=2}^{|\alpha|} \; \sum_{\substack{\beta_1+\cdots+\beta_m=\alpha \\ |\beta_i| < |\alpha|}} \kappa_{\beta_1,\dots,\beta_m}\, D^{(m)} l\bigl(\mathbf{x}(t;y)\bigr) \Bigl( \partial_y^{\beta_1}\mathbf{x}(t;y), \dots, \partial_y^{\beta_m}\mathbf{x}(t;y) \Bigr).
\]
In this expression, $D^{(m)}l\bigl(\mathbf{x}(t;y)\bigr)$ denotes the $m$-th derivative of $l$, which is a symmetric $m$-multilinear form, and the constants $\kappa_{\beta_1,\dots,\beta_m}$ are combinatorial coefficients determined by the differentiation process. 
\medskip

\noindent We now proceed with the proof by induction: \\
For the base case $|\alpha|=1$, consider 
\begin{align}
    \partial_t \Bigl(\partial^\alpha_y \mathbf{x}(t;y)\Bigr)
  &= D l\bigl(\mathbf{x}(t;y)\bigr) \, \partial^\alpha_y \mathbf{x}(t;y) \label{eq:inLemmaExpo1} \\
  &= D l(0) \, \partial^\alpha_y \mathbf{x}(t;y) + \underbrace{\left(D l\bigl(\mathbf{x}(t;y)\bigr) - D l(0)\right) \, \partial^\alpha_y \mathbf{x}(t;y)}_{=:p\bigl(t,\partial^\alpha_y \mathbf{x}(t;y)\bigr)}. \nonumber
\end{align}
This equation can be interpreted as a perturbation of the linear system
\begin{equation}\label{eq:inLemmaHuriwtz}
       \partial_t   \bar{\mathbf{x}}(t;y )   = D l(0)\,  \bar{\mathbf{x}}(t;y ) ,
\end{equation}
where the perturbation term satisfies
\begin{align}\label{eq:inLemma2}
    \|p\bigl(t,\partial^\alpha_y \mathbf{x}(t;y)\bigr)\|  \le \|D l\bigl(\mathbf{x}(t;y)\bigr) - D l(0)\|  \|\partial^\alpha_y \mathbf{x}(t;y)\| \le  L_{Dl} \|\mathbf{x}(t;y)\| \|\partial^\alpha_y \mathbf{x}(t;y)\|.
\end{align}
Here, $L_{Dl}$ denotes the Lipschitz constant of $Dl$ on the compact set
\begin{align*}
 \Xi_x :=   \{ \mathbf{x}(t;y) \, | \, (t,y) \in [0,t_{\varepsilon}] \times \cl{B_{\delta}(x)} \} \cup \cl{B_\varepsilon (0)}  \subset \Omega
\end{align*}
where $\varepsilon>0$ is chosen so that $\cl{B_\varepsilon (0)}\subset\Omega$, and $t_{\varepsilon}>0$  is selected, using the assumption given in \eqref{eq:expAssumption}, so that
\[
C_{0}\,e^{-\mu_{0}(t_{\varepsilon}-t_{0})}\;<\;\varepsilon.
\]
The set $\Xi_x$ is compact because it is the union of two compact subsets of $\Omega$:
\begin{enumerate}
  \item $\cl{B_\varepsilon (0)}$ is compact by construction.
  \item 
    $
      \bigl\{\mathbf{x}(t;y)\mid (t,y)\in[0,t_{\varepsilon}]\times \cl{B_{\delta}(x)}\bigr\}
    $
    is the image of a continuous function under the compact set $[0,t_{\varepsilon}]\times \cl{B_{\delta}(x)}$, and hence is compact.
\end{enumerate}
Moreover, by the assumption given in \eqref{eq:expAssumption} and the estimate
in~\eqref{eq:inLemma2}, we have
\begin{equation}\label{eq:inLemma22}
  \bigl\|p\bigl(t,\partial_y^\alpha \mathbf{x}(t;y)\bigr)\bigr\|
  \le L_{Dl}\,C_0\,e^{-\mu_0\,(t-t_0)}
       \bigl\|\partial_y^\alpha \mathbf{x}(t;y)\bigr\|,
       \qquad
       \text{for all } t\ge t_0,\; y\in\cl{B_\delta(x)}.
\end{equation}
Note that the constants $L_{Dl}$ and $C_0$ are independent of both $y$ and the
multi-index~$\alpha$.  Thus we may apply Lemma~\ref{lem:globallyExp} to the
system~\eqref{eq:inLemmaExpo1}, which has the same dynamics for every
$|\alpha|=1$ (only the initial states differ).  We obtain constants
$\mu'>0$, $t'>0$, and $\tilde{C}>0$ such that
\begin{equation}\label{eq:inLemmaExp3}
    \bigl\|\partial_y^\alpha \mathbf{x}(t;y)\bigr\|
    \le \tilde{C}\,
         \bigl\|\partial_y^\alpha \mathbf{x}(t';y)\bigr\|
         e^{-\mu'(t-t')}
    \le
    \underbrace{\tilde{C}\,
      \max_{\tilde{y}\in\cl{B_\delta(x)}}
      \bigl\|\partial_y^\alpha \mathbf{x}(t';\tilde{y})\bigr\|}_{=:C_\alpha}
      e^{-\mu'(t-t')},
\end{equation}
for all $t\ge t'$ and
$y\in\cl{B_\delta(x)}$.
This establishes the base case for the induction.
\medskip

\noindent \noindent Furthermore, the same argument applies to the state-transition matrix
$\Phi(t,s)$, which satisfies
\begin{equation}\label{eq:inLemmaHomoge}
  \partial_t \Phi(t,s)
  = D l\bigl(\mathbf{x}(t;y)\bigr)\,\Phi(t,s),
  \qquad
  \Phi(s,s)=I,
\end{equation}
for every $t\ge s$.  Observe that the system~\eqref{eq:inLemmaExpo1} and the
column vectors of~\eqref{eq:inLemmaHomoge} share identical dynamics.  Hence,
restricting~\eqref{eq:inLemmaHomoge} to $s\ge t'$, we may once more invoke
Lemma~\ref{lem:globallyExp} and obtain with the same constants as before,
\begin{equation}\label{eq:inLemmaHomogeExpo}
  \|\Phi(t,s)\|_{2}
  \le \|\Phi(t,s)\|_{\mathcal F}
  \le \tilde{C}\,
       \|\Phi(s,s)\|_{\mathcal F}\,
       e^{-\mu'(t-s)}
  \le \tilde{C}\,\sqrt{N}\,e^{-\mu'(t-s)},
  \qquad
  \text{for all } t\ge s\ge t'.
\end{equation}
This result will be instrumental in the subsequent induction step.
\medskip

\noindent Next, in the induction hypothesis, we assume that for some fixed $n\in\mathbb{N}$ the assertion in Part (2) of the lemma holds for every multi-index $\alpha\in\mathbb{N}_0^N$ with $|\alpha|\le n$. For the induction step, let $\bar{\alpha}\in\mathbb{N}_0^N$ be any multi-index with $|\bar{\alpha}| = n+1$. We now consider the system \eqref{eq:inLemmaExpo2} and estimate the remainder term $R_{\bar{\alpha}}(t;y)$. By the definition of $R_{\bar{\alpha}}(t;y)$, we have
\begin{align}\label{eq:inLemmaReminderTerm}
\|R_{\bar{\alpha}}(t;y)\|
  &= \left\|\sum_{m=2}^{|\bar{\alpha}|} \; \sum_{\substack{\beta_1+\cdots+\beta_m=\bar{\alpha} \\ |\beta_i| < |\bar{\alpha}|}} \kappa_{\beta_1,\dots,\beta_m} D^{(m)} l\bigl(\mathbf{x}(t;y)\bigr) \Bigl( \partial_y^{\beta_1}\mathbf{x}(t;y), \dots, \partial_y^{\beta_m}\mathbf{x}(t;y) \Bigr)\right\| \notag \\[1mm]
    &\leq \sum_{m=2}^{|\bar{\alpha}|} \; \sum_{\substack{\beta_1+\cdots+\beta_m=\bar{\alpha} \\ |\beta_i| < |\bar{\alpha}|}} \kappa_{\beta_1,\dots,\beta_m} \Bigl\| D^{(m)}l\bigl(\mathbf{x}(t;y)\bigr)\Bigr\|\prod_{j=1}^{m}\Bigl\|\partial_y^{\beta_j}\mathbf{x}(t;y)\Bigr\| \notag \\[1mm]
    &\le \sum_{m=2}^{|\bar{\alpha}|} \; \sum_{\substack{\beta_1+\cdots+\beta_m=\bar{\alpha} \\ |\beta_i| < |\bar{\alpha}|}} \kappa_{\beta_1,\dots,\beta_m} \Bigl\|D^{(m)}l\bigl(\mathbf{x}(t;y)\bigr)\Bigr\| \prod_{j=1}^{m} \Bigl( C_{\beta_j}\, e^{-\mu' (t-t')} \Bigr) \notag \\[1mm]
    &= e^{-2\mu'(t-t')} \sum_{m=2}^{|\bar{\alpha}|} \; \sum_{\substack{\beta_1+\cdots+\beta_m=\bar{\alpha} \\ |\beta_i| < |\bar{\alpha}|}} \kappa_{\beta_1,\dots,\beta_m}\,\Bigl\|D^{(m)}l\bigl(\mathbf{x}(t;y)\bigr)\Bigr\|\, e^{-(m-2) \mu'(t-t')} \prod_{j=1}^{m} C_{\beta_j} \notag \\[1mm]
    &\le e^{-2\mu'(t-t')} \sum_{m=2}^{|\bar{\alpha}|} \; \sum_{\substack{\beta_1+\cdots+\beta_m=\bar{\alpha} \\ |\beta_i| < |\bar{\alpha}|}} \kappa_{\beta_1,\dots,\beta_m}\,\Bigl\|D^{(m)}l\bigl(\mathbf{x}(t;y)\bigr)\Bigr\| \prod_{j=1}^{m} C_{\beta_j} \notag \\[1mm]
    &\leq C_{R,\bar{\alpha}} \, e^{-2\mu'(t-t')}, \, \text{ for all } t \geq t' \text{ and } y \in \cl{B_{\delta}(x)}.
\end{align}
Here, we used for the first inequality the triangle inequality and the submultiplicative property of the norm for the multilinear form, for the second inequality, the induction hypothesis, valid since $|\beta_j| < |\bar{\alpha}|$ and for the third inequality the fact that $e^{-(m-2)\mu'(t-t')}\le 1$ for $t\geq t'$ and $m\geq 2$. The constant $C_{R,\bar{\alpha}}$ is defined by
\[
C_{R,\bar{\alpha}}  :=  \sum_{m=2}^{|\bar{\alpha}|} \; \sum_{\substack{\beta_1+\cdots+\beta_m=\bar{\alpha} \\ |\beta_i| < |\bar{\alpha}|}} \kappa_{\beta_1,\dots,\beta_m}\, \max_{\tilde{x} \in \Xi}\Bigl\|D^{(m)}l\bigl(\tilde{x}\bigr)\Bigr\| \prod_{j=1}^{m} C_{\beta_j}.
\]
The boundedness of $\max_{\tilde{x} \in \Xi}\Bigl\|D^{(m)}l\bigl(\tilde{x}\bigr)\Bigr\|$ follows from the continuity of $D^{(m)}l$ and the compactness of $ \Xi $.
Next, note that the homogeneous part of \eqref{eq:inLemmaExpo2} is given by \eqref{eq:inLemmaHomoge}
for any multi-index $\alpha$. Hence, by the variation of constants formula, the general solution of \eqref{eq:inLemmaExpo2} can be written for $t\geq t'$ as
\begin{align*}
\partial_y^{\bar{\alpha}}\mathbf{x}(t;y)
 &= \Phi(t,t')\, \partial_y^{\bar{\alpha}}\mathbf{x}(t',y) + \int_{t'}^t \Phi(t,s)\, R_{\bar{\alpha}}(s;y) \mathrm d s.
\end{align*}
Taking the $2$-norm  yields
\begin{align*}
   \Vert \partial_y^{\bar{\alpha}} \mathbf{x}(t;y) \Vert &= \left\Vert \Phi(t,t') \partial_y^{\bar{\alpha}} \mathbf{x}(t',y) + \int_{t'}^t \Phi(t,s) R_{\bar{\alpha}}(s;y) \mathrm d s \right\Vert \\
   &\leq  \Vert \Phi(t,t')\Vert \Vert\partial_y^{\bar{\alpha}} \mathbf{x}(t',y)\Vert + \int_{t'}^t \Vert\Phi(t,s)\Vert  \Vert R_{\bar{\alpha}}(s;y) \Vert \mathrm d s \\
    &\leq   \tilde{C}  \sqrt{N} \left( e^{-\mu' (t-t')}\Vert \Vert\partial_y^{\bar{\alpha}} \mathbf{x}(t',y)\Vert + \int_{t'}^t   e^{-\mu'(t-s)}  C_{R,\bar{\alpha}} e^{-2 \mu' (s-t')} \mathrm d s \right) \\
        &\leq    e^{-\mu' (t-t')} \underbrace{ \tilde{C}  \sqrt{N} \left(   \max_{\tilde{y}\in\cl{B_{\delta}(x)}}\Vert\partial_y^{\bar{\alpha}} \mathbf{x}(t',\tilde{y})\Vert +e^{\mu'
         t'}  C_{R,\bar{\alpha}}  \int_{t'}^t   e^{-\mu' s}   \mathrm d s \right)}_{=:C_{\bar{\alpha}}< \infty}, 
\end{align*}
where we used for the first inequality the triangle inequality and the submultiplicative property of the norm, and, for the second inequality, the estimates \eqref{eq:inLemmaHomogeExpo} and \eqref{eq:inLemmaReminderTerm}. Overall, we deduce that
\[
\|\partial_y^{\bar{\alpha}}\mathbf{x}(t;y)\| \le C_{\bar{\alpha}}\, e^{-\mu'(t-t')},  \, \text{ for all } t \geq t' \text{ and } y \in \cl{B_{\delta}(x)}.
\]
This completes the induction step. By the principle of complete induction, the assertion in Part~(2) of the lemma is thereby established for all multi-indices $\alpha\in\mathbb{N}_0^N$.
\end{proof}

\subsection{Proof of Lemma \ref{lem:subLevel}}
\label{sec:AppAuxLemmaProperSublevels}
\begin{proof}
    \noindent \textbf{Part (1).}  Let  $y\in\cl{\Omega_c}$.
There exists a sequence \((y_i)_{i\in\mathbb{N}}\subset\Omega_c
\subset\cl{\Omega_c}\subset\Omega\) such that
\(\lim_{i\to\infty}y_i=y\).
Since \(V\) is continuous on \(\Omega\),
\[
\lim_{i\to\infty}V(y_i)<c \;\Longrightarrow\; V(y)\le c,
\]
so \(y\in\{x\in\Omega\mid V(x)\le c\}\).
Hence
$
\cl{\Omega_c}\subset\{x\in\Omega\mid V(x)\le c\}.
$\\

\noindent  Conversely, let \(y\in\{x\in\Omega\mid V(x)\le c\}\).
If \(V(y)<c\), then \(y\in\Omega_c\subset\cl{\Omega_c}\).
Hence it remains to prove the case \(V(y)=c\). Because $c>0$ and $V$ is positive-definite, we have $y \neq 0$ and therefore, by assumption, \(\nabla V(y)\neq 0\). We now define
\[
p:=\frac{\nabla V(y)}{\lVert\nabla V(y)\rVert^{2}}\qquad \text{ and } \qquad
q(s):=-\frac{\langle\nabla V(y-sp),\nabla V(y)\rangle}{\lVert\nabla V(y)\rVert^{2}}.
\]
The fundamental theorem of calculus yields
\begin{align}\label{eq:inLemmaSub}
 V(y-tp)=
V(y)-\int_{0}^{t}\langle\nabla V(y-sp),p\rangle\,\mathrm ds
=c+\int_{0}^{t}q(s)\,\mathrm ds   
\end{align}
for all \(t\le t_{1}\), where \(t_{1}>0\) is chosen so small that
\(y-tp\in\Omega\) for every \(t\le t_{1}\) (This is possible because \(\Omega\) is open).
Since \(q\) is continuous and \(q(0)<0\), there exists \(t_{2}\in(0,t_{1})\)
with \(q(t)<0\) for every \(t\le t_{2}\).
Thus, using \eqref{eq:inLemmaSub}, it follows \(V(y-tp)<c\)  and therefore \(y-tp\in\Omega_c\) for \(t\le t_{2}\).
Since the sequence \(\bigl(y-n^{-1}t_{2}p\bigr)_{n\in\mathbb N}\subset\Omega_c\)
converges to \(y\), it follows that \(y\in\cl{\Omega_c}\). \\

\noindent Combining the two inclusions we conclude
\[
\cl{\Omega_c}=\{x\in\Omega\mid V(x)\le c\}.
\]
Furthermore, using this identity and the fact that $\Omega_c$  is open (because $V$ is continuous and $\Omega$ is open), we have
$$\partial \Omega_c= \cl{\Omega_c} \setminus \Omega_c =  \{x\in\Omega\mid V(x)\le c\} \setminus \Omega_c  = \{x\in\Omega\mid V(x)= c\},$$
which also implies that the set $\Omega_c$ is a proper sublevel set.

\medskip 
\noindent \textbf{Part (2).} 
By definition of $\Omega_{\nu c}$, the first property of a proper sublevel set,
namely $V(x) < \nu c$ for all $x \in \Omega_{\nu c}$, is satisfied.
With regard to the second property, i.e., $x \in \partial \Omega_{\nu c}$
must satisfy $V(x)=\nu c$, we assume, for the sake of contradiction, that there exists
$y \in \partial \Omega_{\nu c}$ with $V(y) \neq \nu c$.
Because $V$ is continuous on $ \Omega_{\nu c} \subset \cl{\Omega_c}$, the possibility $V(y)>\nu c$  is ruled out; hence we must have $V(y)<\nu c$. We now distinguish two cases:
\begin{enumerate}
  \item $y \in \Omega_c$.  
        Since $\Omega_c$ is open and $V$ is continuous on $\Omega_c$, there exists a
        neighbourhood $\mathcal N_y \subset \Omega_c$ such that
        $V(\tilde y) <\nu c$ for all $\tilde y \in \mathcal N_y$. 
        Hence $y \notin \partial \Omega_{\nu c}$.
  \item $y \in \partial \Omega_c$.  
        In this case $V(y)=c$, contradicting $V(y) < \nu c$.
\end{enumerate}
Thus no such $y$ exists; therefore $V(x)=\nu c$ for all
$x \in \partial \Omega_{\nu c}$. Additionally, since 
$ \cl{ \Omega_{\nu c}} \setminus \partial \Omega_{\nu c} =  \Omega_{\nu c},$
$\Omega_{\nu c}$ is open. Together with the properties above, this shows that $\Omega_{\nu c}$ is a proper sublevel set of $V$.
\\
Furthermore, since $\partial \Omega_{\nu c}\subset \Omega_c$ and
$\Omega_{\nu c}\subset \Omega_c$, it follows that
$\cl{\Omega_{\nu c}} \subset \Omega_c$.

\medskip  
        \noindent \textbf{Part (3).} Assume, for the sake of contradiction, that the set $\Omega_c$ is not
forward-invariant for the dynamical system~\eqref{eq:dynamicSys}.
Then there exists an initial state $y\in\Omega_c$ and, by continuity of the
trajectory $t\mapsto\mathbf{x}_{l}(t;y)$, a first time $t_{B}>0$ such that
\[
  \mathbf{x}_{l}(t_{B};y)\in\partial\Omega_c.
\]
Hence
\[
  c
  \;=\;
  V\!\bigl(\mathbf{x}_{l}(t_{B};y)\bigr)
  \;=\;
  V(y)
  \;+\;
  \int_{0}^{t_{B}}
        \underbrace{%
          \bigl\langle
            l\!\bigl(\mathbf{x}_{l}(s;y)\bigr),
            \nabla V\!\bigl(\mathbf{x}_{l}(s;y)\bigr)
          \bigr\rangle}_{\le 0}
        \,ds
  \;<\;c,
\]
a contradiction.  Therefore $\Omega_c$ is forward-invariant.

 \medskip
    \noindent Since the closure of the forward-invariant set is compact, the trajectory
\(\mathbf{x}_{l}(t;x)\) cannot blow up in finite time.  Because
\(l \in \xCone\!\bigl(\cl{\Omega_c},\mathbb{R}^{N}\bigr)\),
the map \(l\) is Lipschitz continuous on \(\cl{\Omega_c}\);
therefore the solution exists for all \(t \ge 0\)
(see the proof of~\cite[Theorem 3.3]{Khalil2002}).
    
    \medskip
    \noindent Regarding the asymptotic stability,  assume that there exists an initial state $x\in\Omega_c$ such that
$
  \lVert\mathbf{x}_{l}(t;x)\rVert$ does not converge to zero for $t\to\infty$, i.e., there is a radius $\rho>0$ with the property that for every $T>0$
there exists $t>T$ satisfying $\lVert\mathbf{x}_{l}(t;x)\rVert>\rho$.
Hence we can choose an increasing sequence
$0<t_{1}<t_{2}<\dots$ such that
$\lVert\mathbf{x}_{l}(t_{n};x)\rVert>\rho$ for all $n\in\mathbb{N}$.
Because the vector field $l$ is continuous in the compact set
$\cl{\Omega_c}$, it is bounded; therefore, by the mean value theorem and the forward invariance of $\Omega_c$, every trajectory
$t\mapsto\mathbf{x}_{l}(t;x)$ is uniformly continuous.
Consequently, there exists $\delta>0$ such that
\[
  \bigl\lVert\mathbf{x}_{l}(t;x)\bigr\rVert>\frac{\rho}{2}
  \qquad
  \forall\,t\in\bigl[t_{n}-\tfrac{\delta}{2},\,t_{n}+\tfrac{\delta}{2}\bigr],
  \ \ n\ge 2.
\]
Define
\[
  M := 
  \max_{x\in\operatorname{cl} \left(\Omega_c\setminus B_{\rho}(0)\right)}
      \bigl\langle l(x),\nabla V(x)\bigr\rangle
  < 0,
\]
which is well-defined because
$\langle l(x),\nabla V(x)\rangle$ is continuous and strictly negative
on $\cl{\Omega_c}\setminus\{0\}$.
Using the forward invariance of $\Omega_c$ and integrating the
 derivative of $V$ along the trajectory yields
\[
\begin{aligned}
  V\!\bigl(\mathbf{x}_{l}(t_{n}+\tfrac{\delta}{2};x)\bigr)
  &=
  V(x)
  +\int_{0}^{t_{n}+\delta/2}
      \bigl\langle
        l\!\bigl(\mathbf{x}_{l}(s;x)\bigr),
        \nabla V\!\bigl(\mathbf{x}_{l}(s;x)\bigr)
      \bigr\rangle
      ds \\[6pt]
  &\le
  c
  +\sum_{i=2}^{n}
     \int_{t_{i}-\delta/2}^{t_{i}+\delta/2}
       \bigl\langle
         l\!\bigl(\mathbf{x}_{l}(s;x)\bigr),
         \nabla V\!\bigl(\mathbf{x}_{l}(s;x)\bigr)
       \bigr\rangle
       ds \\[6pt]
  &\le
  c + (n-2)\,M\,\frac{\delta}{2}.
\end{aligned}
\]
Since $M<0$, the right-hand side is negative for sufficiently large
$n$, contradicting the non-negativity of $V$.
Therefore
\(
  \displaystyle\lim_{t\to\infty}
  \lVert\mathbf{x}_{l}(t;x)\rVert
  =0.\) Moreover, since $V$ is continuous, we have
  \begin{align*}
      c> V(\mathbf{x}_{l}(t;x)) \Longrightarrow  c> V(0) = 0 \Longrightarrow 0\in \Omega_c.
  \end{align*}

\end{proof}

%%-----------------------------
\end{document}